\documentclass[a4paper,twoside,10pt]{amsart}
\usepackage{amsmath,amsfonts,amssymb,amsthm}
\newtheorem{theorem}{Theorem}[section]
\newtheorem{proposition}{Proposition}[section]
\usepackage{mathrsfs}
\usepackage{color,graphicx}
\usepackage[a4paper]{geometry}
\numberwithin{equation}{section}

\author[G. Nemes]{Gerg\H{o} Nemes}
\address{School of Mathematics, The University of Edinburgh, James Clerk Maxwell Building, The King's Buildings, Peter Guthrie Tait Road, Edinburgh EH9 3FD, UK}
\email{Gergo.Nemes@ed.ac.uk}

\keywords{asymptotic expansions, error bounds, Hurwitz zeta function, polygamma functions, gamma function, Barnes $G$-function}
\subjclass[2010]{41A60, 30E15, 11M35, 33B15}

\begin{document}

\title[The asymptotics of the Hurwitz zeta function]{Error bounds for the asymptotic expansion\\ of the Hurwitz zeta function}

\begin{abstract} In this paper, we reconsider the large-$a$ asymptotic expansion of the Hurwitz zeta function $\zeta(s,a)$. New representations for the remainder term of the asymptotic expansion are found and used to obtain sharp and realistic error bounds. Applications to the asymptotic expansions of the polygamma functions, the gamma function, the Barnes $G$-function and the $s$-derivative of the Hurwitz zeta function $\zeta(s,a)$ are provided. A detailed discussion on the sharpness of our error bounds is also given.
\end{abstract}
\maketitle

\section{Introduction and main results}\label{sec1} In the present paper, we reconsider the large-$a$ asymptotic expansion of the Hurwitz zeta function which is defined for complex $s$ and $a$ by the series expansion
\[
\zeta(s,a) = \sum\limits_{n = 0}^\infty \frac{1}{\left( n + a \right)^s } ,
\]
when $\Re(s)>1$ and $a\notin \mathbb{Z}_{\leq 0}$. The Hurwitz zeta function has a meromorphic continuation in the $s$-plane with a simple pole of residue $1$ at $s=1$. The Riemann zeta function is a special case since $\zeta( s,\frac{1}{2}) =(2^s-1)\zeta(s)$ and $\zeta( s,1) = \zeta(s)$. As a function of $a$, with $s$ ($\neq 1$) fixed, $\zeta( s,a)$ is analytic in the domain $\left| {\arg a} \right| < \pi$ and (in general) possesses branch-point singularities at non-positive integer values of $a$.

It is well known that, as $a\to \infty$ in the sector $|\arg a|\leq \pi-\delta$, with $s$ ($\neq 1$) and $\delta>0$ being fixed, the Hurwitz zeta function has the asymptotic expansion
\begin{equation}\label{eq3}
\zeta(s,a) \sim \frac{1}{2}a^{ - s}  + \frac{a^{1 - s} }{s - 1} + a^{1 - s}\sum\limits_{n = 1}^{\infty} \frac{B_{2n}}{\left( {2n} \right)!}\frac{ ( s)_{2n-1}}{a^{2n} },
\end{equation}
where the $B_{2n}$'s denote the even-order Bernoulli numbers and $(w)_p = \Gamma(w + p)/\Gamma(w)$ is the so-called Pochhammer symbol (see, e.g., \cite[p. 25]{Magnus} or \cite[eq. 25.11.43]{NIST}). In the special case that $a=N$ is a large positive integer, \eqref{eq3} can be regarded as an asymptotic expansion for the tail of the Dirichlet series expansion of the Riemann zeta function $\zeta(s)$, because
\begin{equation}\label{eq10}
\zeta(s,N) = \sum\limits_{n = 0}^\infty  \frac{1}{\left( {n + N} \right)^s } = \sum\limits_{n = N}^\infty \frac{1}{n^s} = \zeta(s) - \sum\limits_{n = 1}^{N - 1} \frac{1}{n^s} .
\end{equation}
In this special case, formula \eqref{eq3} is most likely due to Euler (see, e.g., \cite[Chapter 6]{Euler}), who used it to compute the numerical values of $\zeta(s)$ for $s=2,3,\ldots,15,16$. Gram \cite{Gram} combined \eqref{eq3} and \eqref{eq10} to evaluate numerically the first several zeros of the Riemann zeta function $\zeta(s)$ in the critical strip $0<\Re(s)<1$. A refinement of Gram's analysis was provided by Backlund \cite{Backlund}, who also gave precise bounds for the remainder term of the asymptotic expansion \eqref{eq3} in the case that $a$ is a positive integer. Recently, Johannson \cite{Johansson} gave a numerical treatment of $\zeta(s,a)$ based on the asymptotic expansion \eqref{eq3}.

The main aim of the present paper is to derive new representations and bounds for the remainder of the asymptotic expansion \eqref{eq3}. Thus, for $| \arg a| < \pi$, $s \ne 1$ and any positive integer $N$, we define the $N$th remainder term $R_N(s,a)$ of the asymptotic expansion \eqref{eq3} via the equality
\begin{equation}\label{eq2}
\zeta(s,a) = \frac{1}{2}a^{ - s}  + \frac{a^{1 - s}}{s - 1} + a^{1 - s}\left( \sum\limits_{n = 1}^{N - 1} \frac{B_{2n}}{\left( {2n} \right)!}\frac{(s)_{2n-1}}{a^{2n}}  + R_N (s,a) \right).
\end{equation}
Throughout this paper, if not stated otherwise, empty sums are taken to be zero. The derivations of the estimates for $R_N (s,a)$ are based on new representations of this remainder term. Before stating the main results in detail, we introduce some notation. We denote
\[
\Pi_p(w)=\frac{{w^p }}{2}\left( {e^{\frac{\pi }{2}ip} e^{iw} \Gamma(1 - p,we^{\frac{\pi }{2}i} ) + e^{ - \frac{\pi }{2}ip} e^{ - iw} \Gamma ( 1 - p,we^{ - \frac{\pi }{2}i} )} \right),
\]
where $\Gamma(1 - p,w)$ is the incomplete gamma function. The function $\Pi_p(w)$ was originally introduced by Dingle \cite[pp. 407]{Dingle} and, following his convention, we refer to it as a basic terminant (but note that Dingle's notation slightly differs from ours, e.g., $\Pi_{p-1}(w)$ is used for our $\Pi_p(w)$). The basic terminant is a multivalued function of its argument $w$ and, when the argument is fixed, is an entire function of its order $p$. We shall also use the concept of the polylogarithm function $\operatorname{Li}_p(w)$ which is defined by the power series expansion
\[
\operatorname{Li}_p(w) = \sum\limits_{n = 1}^\infty \frac{w^n}{n^p}
\]
for $\left|w\right|<1$ and by analytic continuation elsewhere \cite[\S 25.12]{NIST}. The order $p$ of this function can take arbitrary complex values. Finally, we shall denote
\[
\chi \left( p \right) = \pi ^{\frac{1}{2}} \frac{\Gamma \left( \frac{p}{2} + 1 \right)}{\Gamma \left( \frac{p}{2} + \frac{1}{2} \right)},
\]
for any complex $p$ with $\Re(p) > 0$.

We are now in a position to formulate our main results. In Theorem \ref{thm1}, we give new representations for the remainder term $R_N(s,a)$.

\begin{theorem}\label{thm1} Let $N$ be a positive integer and let $s$ be an arbitrary complex number such that $s\neq 1$ and $\Re ( s ) > 1 - 2N$. Then
\begin{equation}\label{eq4}
R_N (s,a) = \left( { - 1} \right)^{N + 1} \frac{2}{{\Gamma (s)}}\frac{1}{{\left( {2\pi a} \right)^{2N} }}\int_0^{ + \infty } {\frac{{t^{s + 2N - 2} }}{{1 + (t/2\pi a)^2 }}\operatorname{Li}_{1 - s} (e^{ - t} )dt}
\end{equation}
provided $|\arg a|<\frac{\pi}{2}$, and
\begin{equation}\label{eq33}
R_N ( s,a) = \left( { - 1} \right)^{N + 1} 2\frac{ ( s)_{2N-1}}{\left( {2\pi a} \right)^{2N}}\sum\limits_{k = 1}^\infty \frac{\Pi _{s + 2N - 1} ( 2\pi ak)}{k^{2N}} ,
\end{equation}
\begin{equation}\label{eq88}
R_N (s,a) = \frac{{ (s)_{2N}}}{{\left( {2N} \right)!}} a^{s-1} \int_0^{ + \infty } {\frac{{B_{2N}  - B_{2N} ( t - \left\lfloor t \right\rfloor )}}{{\left( {t + a} \right)^{s + 2N} }}dt} 
\end{equation}
provided $|\arg a|<\pi$, where the $B_{2N}(t)$'s denote the even-order Bernoulli polynomials.
\end{theorem}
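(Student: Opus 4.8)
The plan is to establish the three representations in a single chain, anchoring the Euler--Maclaurin form \eqref{eq88} directly and using an integral representation of the basic terminant as the bridge to the terminant series \eqref{eq33} and the polylogarithmic form \eqref{eq4}. First I would apply the Euler--Maclaurin summation formula to $\zeta(s,a)=\sum_{n\ge0}(n+a)^{-s}$ with $f(t)=(t+a)^{-s}$, working initially for $\Re(s)$ large and $a>0$ so that all contributions at $t=+\infty$ vanish. Since $f^{(k)}(t)=(-1)^k(s)_k(t+a)^{-s-k}$, $\int_0^\infty f=\frac{a^{1-s}}{s-1}$ and $\frac12 f(0)=\frac12 a^{-s}$, the formula reproduces the explicit terms of \eqref{eq2} and leaves a remainder of the shape appearing in \eqref{eq88}; that this equals the $R_N$ defined by \eqref{eq2} is checked directly, the constant $B_{2N}$ in the numerator accounting precisely for the $n=N$ term $\frac{B_{2N}}{(2N)!}(s)_{2N-1}a^{-2N}$ through $\int_0^\infty B_{2N}f^{(2N)}(t)\,dt=-B_{2N}f^{(2N-1)}(0)$. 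Both sides are analytic for $\Re(s)>1-2N$, $s\ne1$ and for $|\arg a|<\pi$ (where the integral converges), so \eqref{eq88} extends to the full stated range by analytic continuation.

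The bridge to the remaining forms is the integral representation
\[
\Pi_p(w)=\frac{w^2}{\Gamma(p)}\int_0^{+\infty}\frac{u^{p-1}e^{-u}}{u^2+w^2}\,du,\qquad \Re(p)>0,\ |\arg w|<\tfrac{\pi}{2},
\]
which I would prove from the partial fraction $\frac{1}{u^2+w^2}=\frac{1}{2iw}\bigl(\frac{1}{u-iw}-\frac{1}{u+iw}\bigr)$ and the elementary evaluation $\int_0^\infty\frac{u^{p-1}e^{-u}}{u+\beta}\,du=\Gamma(p)e^{\beta}\beta^{p-1}\Gamma(1-p,\beta)$, matched against the definition of $\Pi_p$. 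To generate the terminant series I would start from the Abel--Plana (equivalently Hermite) representation $\zeta(s,a)-\frac12 a^{-s}-\frac{a^{1-s}}{s-1}=i\int_0^\infty\frac{(a+ix)^{-s}-(a-ix)^{-s}}{e^{2\pi x}-1}\,dx$, expand $(e^{2\pi x}-1)^{-1}=\sum_{k\ge1}e^{-2\pi kx}$, and evaluate each $\int_0^\infty(a\pm ix)^{-s}e^{-2\pi kx}\,dx$ as an incomplete gamma function by rotating the contour; assembling the conjugate pair produces exactly $\Pi_{s+1}(2\pi ak)$ and hence \eqref{eq33} for $N=1$. Substituting the integral representation of $\Pi_{s+1}$ back in, interchanging the sum and the integral, putting $u=kt$ and recognizing $\sum_{k\ge1}k^{s-1}e^{-kt}=\operatorname{Li}_{1-s}(e^{-t})$ then converts this into \eqref{eq4} for $N=1$.

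For general $N$ I would argue inductively. Comparing \eqref{eq2} for $N$ and $N+1$ gives the recurrence $R_N-R_{N+1}=\frac{B_{2N}}{(2N)!}\frac{(s)_{2N-1}}{a^{2N}}$. On the other side, rewriting \eqref{eq4} as $R_N=(-1)^{N+1}\frac{2}{\Gamma(s)(2\pi a)^{2N-2}}\int_0^\infty\frac{t^{s+2N-2}}{t^2+(2\pi a)^2}\operatorname{Li}_{1-s}(e^{-t})\,dt$, a one-line manipulation of the kernel (clearing the factor $\frac{t^2+(2\pi a)^2}{(2\pi a)^2}$) shows that the two integrands for consecutive $N$ differ by $\frac{2(-1)^{N+1}}{\Gamma(s)(2\pi a)^{2N}}t^{s+2N-2}\operatorname{Li}_{1-s}(e^{-t})$, whose integral equals $\frac{2(-1)^{N+1}}{\Gamma(s)(2\pi a)^{2N}}\Gamma(s+2N-1)\zeta(2N)$; using $\zeta(2N)=(-1)^{N+1}\frac{(2\pi)^{2N}B_{2N}}{2(2N)!}$ this is exactly $\frac{B_{2N}}{(2N)!}\frac{(s)_{2N-1}}{a^{2N}}$, closing the induction. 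The same terminant lemma then upgrades \eqref{eq4} to \eqref{eq33} for every $N$, and analytic continuation in $a$ carries \eqref{eq33} from $|\arg a|<\frac{\pi}{2}$ to $|\arg a|<\pi$. The hardest part, I expect, is the base case: the contour rotations and branch choices needed to identify the Abel--Plana integrals with incomplete gamma functions (hence with $\Pi_{s+1}$), together with the justification of the two interchanges of summation and integration, where the hypotheses $\Re(s)>1-2N$ and $|\arg a|<\frac{\pi}{2}$ enter through dominated-convergence bounds near the endpoints. Once the base case is secured, the passage among the three forms is essentially formal, resting only on the terminant integral and the recurrence.
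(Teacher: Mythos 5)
Your plan is correct, but it travels a genuinely different road from the paper's. The paper proves \eqref{eq4} first and for all $N$ at once: it inserts Temme's partial-fraction expansion \eqref{eq15} into the classical integral representation \eqref{eq5} of $R_N(s,a)$, which is already valid for $\Re(s)>1-2N$; it then deduces \eqref{eq33} by the substitution $v=kt$ and the terminant integral \eqref{eq150}, and obtains \eqref{eq88} by a single integration by parts in the known representation with the odd Bernoulli kernel $B_{2N-1}(t-\lfloor t\rfloor)$. You instead anchor \eqref{eq88} directly in Euler--Maclaurin (equivalent bookkeeping, fine), derive the $N=1$ cases of \eqref{eq33} and \eqref{eq4} from Hermite's (Abel--Plana) formula by expanding $(e^{2\pi x}-1)^{-1}$ geometrically --- in effect re-proving Paris's expansion \eqref{eq25} with $N_k\equiv 1$ by a third method, the paper getting \eqref{eq25} from \eqref{eq45} and Paris using Mellin--Barnes --- and then climb in $N$ via the recurrence $R_N-R_{N+1}=\frac{B_{2N}}{(2N)!}(s)_{2N-1}a^{-2N}$, whose integral counterpart you evaluate exactly; I checked that $\int_0^{+\infty}t^{s+2N-2}\operatorname{Li}_{1-s}(e^{-t})\,dt=\Gamma(s+2N-1)\zeta(2N)$ together with $\zeta(2N)=(-1)^{N+1}(2\pi)^{2N}B_{2N}/(2(2N)!)$ does close the induction. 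Two points must be made explicit in a full write-up. First, your base case requires $\Re(s)>-1$, so the induction alone yields \eqref{eq4} for general $N$ only on that half-plane; you need to interleave analytic continuation in $s$ (legitimate, since $\operatorname{Li}_{1-s}(e^{-t})$ is entire in $s$ and behaves like $\Gamma(s)t^{-s}+\zeta(1-s)+\mathcal{O}(t)$ as $t\to0^+$, making the right-hand side of \eqref{eq4} analytic for $\Re(s)>1-2N$, $s\neq 1$) --- whereas the paper's route delivers the full strip immediately. Second, rotating the contours in $\int_0^{+\infty}(a\pm ix)^{-s}e^{-2\pi kx}\,dx$ yields the \emph{antisymmetric} combination of $e^{\mp\frac{\pi}{2}is}e^{\mp 2\pi ika}\Gamma(1-s,2\pi kae^{\mp\frac{\pi}{2}i})$, which is not yet of the shape defining the basic terminant; one must apply the recurrence $\Gamma(1-s,z)=-s\Gamma(-s,z)+z^{-s}e^{-z}$ (the exponential boundary terms cancel in the difference) to reach the symmetric order-$(-s)$ combination equal to $\frac{2s}{w^{s+1}}\Pi_{s+1}(w)$ with $w=2\pi ak$; this works, but your phrase ``produces exactly $\Pi_{s+1}$'' conceals it. With those two steps supplied your argument is complete: it buys independence from the quoted representation \eqref{eq5} and Temme's identity, at the cost of an induction and a continuation in $s$ that the paper's uniform-in-$N$ derivation avoids.
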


Our derivation of \eqref{eq33} is based on the integral representation \eqref{eq4}. Alternatively, formula \eqref{eq33} can be deduced from the more general expansion \eqref{eq25} of Paris or from an analogous result about the Lerch zeta function by Katsurada \cite[Theorem 1]{Katsurada}.

The subsequent theorem provides bounds for the remainder $R_N(s,a)$. The error bound \eqref{eq12} may be further simplified by employing the various estimates for $\mathop {\sup }\nolimits_{r \ge 1} \left| {\Pi_p ( 2\pi ar)} \right|$ given in Appendix \ref{appendixa}.

\begin{theorem}\label{thm2} Let $N$ be a positive integer and let $s$ be an arbitrary complex number such that $s\neq 1$ and $\Re ( s ) > 1 - 2N$. Then
\begin{equation}\label{eq12}
\left| R_N ( s,a) \right| \le \frac{{\left| {B_{2N} } \right|}}{{\left( {2N} \right)!}}\frac{{\left| {( s)_{2N-1}} \right|}}{{\left| a \right|^{2N} }}\mathop {\sup }\limits_{r \ge 1} \left| {\Pi _{s + 2N - 1} ( 2\pi ar)} \right|,
\end{equation}
\begin{equation}\label{eq111}
\left| R_N( s,a) \right| \le \frac{{\left| {B_{2N} } \right|}}{{\left( {2N} \right)!}}\frac{{\left| {(s)_{2N - 1}} \right|}}{{\left| a \right|^{2N} }}\frac{{\left| {s + 2N - 1} \right|}}{{\Re \left( s \right) + 2N - 1}}\sec ^{\Re \left( s \right) + 2N} \Big( {\frac{{\arg a}}{2}} \Big) \max ( 1,e^{-\Im \left( s \right)\arg a} ),
\end{equation}
provided that $|\arg a|<\pi$, and
\begin{equation}\label{eq113}
\left| R_N (s,a) \right| \le \frac{\left| B_{2N} \right|}{(2N)!}\frac{\left| {(s)_{2N - 1} } \right|}{\left| a \right|^{2N}}\left( 1 + \frac{\left| s + 2N - 1 \right|}{\Re (s) + 2N - 1}\chi (\Re (s) + 2N - 1)\max (1,e^{ - \Im (s)\arg a} ) \right),
\end{equation}
provided that $|\arg a|\leq \frac{\pi}{2}$.
\end{theorem}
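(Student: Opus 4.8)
The plan is to derive all three bounds from the series representation \eqref{eq33} of Theorem \ref{thm1}, treating \eqref{eq12} as the fundamental estimate and obtaining \eqref{eq111} and \eqref{eq113} from it by substituting explicit bounds on the basic terminant. First I would start from \eqref{eq33} and pass to absolute values inside the series, giving
\[
\left| R_N(s,a) \right| \le 2\frac{\left| (s)_{2N-1} \right|}{\left| 2\pi a \right|^{2N}} \sum_{k=1}^\infty \frac{\left| \Pi_{s+2N-1}(2\pi ak) \right|}{k^{2N}}.
\]
Since every summation index $k$ is a positive integer, it is an admissible value of $r$ in the supremum; hence I would bound each factor $\left| \Pi_{s+2N-1}(2\pi ak) \right|$ by $\sup_{r\ge 1}\left| \Pi_{s+2N-1}(2\pi ar) \right|$ and pull this quantity outside the sum. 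The residual series is $\sum_{k=1}^\infty k^{-2N} = \zeta(2N)$, which converges absolutely because $2N\ge 2$ and, under the hypothesis $\Re(s)>1-2N$ (so that $\Re(s+2N-1)>0$), the terminant stays bounded as $k\to\infty$.

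The key algebraic simplification is Euler's evaluation $\zeta(2N) = (-1)^{N+1}(2\pi)^{2N}B_{2N}/(2\,(2N)!)$. Because $\zeta(2N)>0$, this equals $(2\pi)^{2N}\left| B_{2N} \right|/(2\,(2N)!)$, and on substituting it back the factor $(2\pi)^{2N}$ cancels against $\left| 2\pi a \right|^{-2N}$ while the two factors of $2$ cancel as well, leaving precisely the right-hand side of \eqref{eq12}. This establishes the first bound throughout the sector $|\arg a|<\pi$ on which \eqref{eq33} holds.

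For \eqref{eq111} and \eqref{eq113} I would simply feed the explicit estimates for $\sup_{r\ge 1}\left| \Pi_p(2\pi ar) \right|$ from Appendix \ref{appendixa} into \eqref{eq12}, taking $p = s+2N-1$, so that $\Re(p) = \Re(s)+2N-1$, $\Re(p)+1 = \Re(s)+2N$, and $\Im(p) = \Im(s)$. The estimate valid for $|\arg a|<\pi$ produces the $\sec^{\Re(s)+2N}(\tfrac{\arg a}{2})$ factor of \eqref{eq111}, whereas the sharper estimate involving $\chi$, valid only on the narrower sector $|\arg a|\le\frac{\pi}{2}$, yields \eqref{eq113}. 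I expect the genuine difficulty to reside entirely in those Appendix \ref{appendixa} bounds—controlling the two incomplete gamma functions in the definition of $\Pi_p$ uniformly in $r\ge 1$ and in $\arg a$—while the passage from Theorem \ref{thm1} to Theorem \ref{thm2} is essentially the elementary term-by-term estimate sketched above.
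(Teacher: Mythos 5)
Your derivation of \eqref{eq12} is correct and is exactly the paper's argument: take absolute values in \eqref{eq33}, majorize each terminant by the supremum, and convert $\sum_{k\ge1}k^{-2N}=\zeta(2N)$ via Euler's identity \eqref{eq37}. The gap is in your claim that \eqref{eq111} and \eqref{eq113} then follow by ``feeding the Appendix \ref{appendixa} estimates into \eqref{eq12}.'' They do not, and the shapes of the bounds already show this. For \eqref{eq111}: no appendix bound produces the half-angle factor $\sec^{\Re(s)+2N}\big(\frac{\arg a}{2}\big)$ on the full sector $|\arg a|<\pi$. The secant-type bounds \eqref{eq441} and \eqref{eq657} involve $\sec^{\Re(p)}(\arg w)$ --- full angle, exponent $\Re(s)+2N-1$ rather than $\Re(s)+2N$ --- and are confined to $|\arg w|<\frac{\pi}{2}$; beyond that sector the only available estimate is Proposition \ref{prop4}, whose $|\sin(\arg w)|^{-\Re(p)}$ blow-up and extra additive terms give nothing resembling \eqref{eq111}. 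For \eqref{eq113}: inserting Proposition \ref{prop3} into \eqref{eq12} yields precisely the bound \eqref{eq779} recorded in the Discussion, which carries the gamma-ratio term $\Gamma(\Re(s)+2N-1)/(2|\Gamma(s+2N-1)|)$ (growing exponentially in $|\Im(s)|$) and a halved $\chi$-coefficient; and for $|\arg a|\le\frac{\pi}{4}$ Proposition \ref{prop3} does not even apply, leaving only Proposition \ref{prop1} with its gamma ratio. The paper explicitly observes that \eqref{eq113} can be \emph{sharper} than \eqref{eq414} and \eqref{eq779} exactly because it avoids that ratio, so \eqref{eq113} cannot be a consequence of \eqref{eq12} plus the appendix.

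What the paper actually does for these two bounds is route them through the Bernoulli-polynomial representations rather than through the terminant series. For \eqref{eq111} it starts from \eqref{eq88}, uses the sign-constancy of $B_{2N}-B_{2N}(t-\lfloor t\rfloor)$ (via the Fourier expansion \eqref{eq811}) together with the elementary inequalities \eqref{eq810} and $|t+a|\ge(t+|a|)\cos\big(\frac{\arg a}{2}\big)$ from \eqref{eq844} --- this is where the half-angle secant originates --- to reduce the complex case to $|R_N(\Re(s),|a|)|$, which is then controlled by Theorem \ref{thm4}; note this makes \eqref{eq111} logically dependent on Theorem \ref{thm4}, not on the appendix. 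For \eqref{eq113} it uses the intermediate identity \eqref{eq112}: the integrated-out term supplies the ``$1$'' inside the parentheses, while the remaining integral is bounded using $|B_{2N}(t-\lfloor t\rfloor)|\le|B_{2N}|$ and $|t+a|^2\ge t^2+|a|^2$ (valid precisely for $|\arg a|\le\frac{\pi}{2}$, which is why the sector shrinks), and the resulting integral is evaluated as a beta function, producing the factor $\chi(\Re(s)+2N-1)$. So two of the three bounds require ideas absent from your proposal, and the step you expected to be routine is the one that fails.
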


In the special case that $a$ is a positive integer, the bound \eqref{eq111} is equivalent to that obtained by Backlund.

For the case of positive $a$ and real $s$, we shall show that the remainder term $R_N(s,a)$ does not exceed the first neglected term in absolute value and has the same sign provided that $s\neq 1$ and $s > 1 - 2N$. More precisely, we will prove that the following theorem holds.

\begin{theorem}\label{thm4} Let $N$ be a positive integer, $a$ be a positive real number, and let $s$ be an arbitrary real number such that $s\neq 1$ and $s > 1 - 2N$. Then
\begin{equation}\label{eq46}
R_N (s,a) = \frac{B_{2N} }{(2N)!}\frac{(s)_{2N - 1} }{a^{2N} }\theta _N (s,a),
\end{equation}
where $0<\theta _N (s,a)<1$ is a suitable number that depends on $s$, $a$ and $N$.
\end{theorem}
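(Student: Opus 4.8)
The plan is to work from the integral representation \eqref{eq4}, which is valid throughout $|\arg a|<\frac{\pi}{2}$ and in particular for positive real $a$. Write
\[
R_N(s,a) = C\int_0^{+\infty} \frac{t^{s+2N-2}}{1+(t/2\pi a)^2}\operatorname{Li}_{1-s}(e^{-t})\,dt, \qquad C := (-1)^{N+1}\frac{2}{\Gamma(s)(2\pi a)^{2N}}.
\]
For real $s$ and $t>0$ the polylogarithm is a sum of positive terms, $\operatorname{Li}_{1-s}(e^{-t})=\sum_{n\ge1}n^{s-1}e^{-nt}>0$, so the weight $w(t):=t^{s+2N-2}\operatorname{Li}_{1-s}(e^{-t})$ is positive on $(0,+\infty)$ and integrable there (the convergence at the origin is precisely what the hypothesis $s>1-2N$ guarantees). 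The crucial observation is that the remaining factor $g(t):=\bigl(1+(t/2\pi a)^2\bigr)^{-1}$ is a damping kernel: since $a>0$ one has $0<g(t)<1$ for every $t>0$, with $g(0)=1$.

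First I would show that discarding this damping factor reproduces exactly the first neglected term. Interchanging summation and integration (legitimate by Tonelli, all terms being positive) gives
\[
\int_0^{+\infty} w(t)\,dt = \sum_{n=1}^\infty n^{s-1}\int_0^{+\infty} t^{s+2N-2}e^{-nt}\,dt = \Gamma(s+2N-1)\sum_{n=1}^\infty \frac{1}{n^{2N}} = \Gamma(s+2N-1)\zeta(2N).
\]
Inserting Euler's evaluation $\zeta(2N)=(-1)^{N+1}(2\pi)^{2N}B_{2N}/(2(2N)!)$ and using $\Gamma(s+2N-1)/\Gamma(s)=(s)_{2N-1}$, a short computation yields
\[
C\int_0^{+\infty} w(t)\,dt = \frac{B_{2N}}{(2N)!}\frac{(s)_{2N-1}}{a^{2N}},
\]
which is exactly the first neglected term appearing on the right-hand side of \eqref{eq46}.

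To finish, I would define
\[
\theta_N(s,a) := \frac{\int_0^{+\infty} g(t)w(t)\,dt}{\int_0^{+\infty} w(t)\,dt},
\]
so that the two displayed evaluations give $R_N(s,a)=\theta_N(s,a)\cdot\frac{B_{2N}}{(2N)!}\frac{(s)_{2N-1}}{a^{2N}}$ directly, with the common factor $C$ cancelling (and the identity persisting trivially even when $C=0$, e.g.\ for the non-positive integer values of $s$ in the range, where both sides vanish). Because $w>0$ and $0<g<1$ on $(0,+\infty)$, the quantity $\theta_N$ is a strict convex average of the values of $g$, whence $0<\theta_N(s,a)<1$, as required.

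I expect the argument to be essentially free of serious obstacles: the only points needing care are the justification of the term-by-term integration and the convergence of $\int_0^{+\infty}w(t)\,dt$ at the origin, both of which are controlled by the standing hypothesis $s>1-2N$. The genuine content is simply the recognition that $\bigl(1+(t/2\pi a)^2\bigr)^{-1}$ acts as a kernel bounded strictly between $0$ and $1$ against a sign-definite weight, which converts the exact remainder into a proper fraction of the first neglected term.
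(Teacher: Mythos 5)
Your proof is correct, but it takes a genuinely different (and more direct) route than the paper. The paper deduces the theorem from the terminant-series representation \eqref{eq33}: since each $\Pi_{s+2N-1}(2\pi a k)$ lies strictly between $0$ and $1$ for positive argument and order (Proposition \ref{prop1}), the series is a positive quantity strictly smaller than $2\,(s)_{2N-1}(2\pi a)^{-2N}\sum_{k\ge1}k^{-2N}$, and the identity \eqref{eq37} converts that majorant into the first neglected term. You instead work directly on the integral representation \eqref{eq4}, observing that $\bigl(1+(t/2\pi a)^2\bigr)^{-1}$ is a kernel strictly between $0$ and $1$ against the positive weight $t^{s+2N-2}\operatorname{Li}_{1-s}(e^{-t})$, and that removing the kernel yields, via Tonelli and Euler's evaluation of $\zeta(2N)$, exactly the first neglected term. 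In effect you have inlined both ingredients of the paper's argument: the paper's proof of Proposition \ref{prop1} is the very same kernel-plus-mean-value observation applied to \eqref{eq150} one terminant at a time, and your term-by-term evaluation of $\int_0^{+\infty}w(t)\,dt$ reproves \eqref{eq37}. What each approach buys: the paper's factorization through \eqref{eq33} and Proposition \ref{prop1} reuses machinery that is needed anyway for Theorem \ref{thm2} and the sharpness discussion, whereas your argument is self-contained for this one theorem, avoiding the basic terminant entirely and requiring only positivity of the polylogarithm and $\zeta(2N)=(-1)^{N+1}(2\pi)^{2N}B_{2N}/(2(2N)!)$. Your parenthetical disposing of the degenerate case $C=0$ (non-positive integer $s$ in the range, where $(s)_{2N-1}=0$ and both sides vanish, while the ratio defining $\theta_N$ still lies in $(0,1)$) is a point the paper passes over silently, and it is handled correctly.
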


We remark that the special case of this theorem when $a$ is a positive integer is known and it appears, for example, in \cite[exer. 3.2, p. 292]{Olver}.

We would like to emphasize that the requirement $\Re(s) > 1 - 2N$ in the above theorems is not a serious restriction. Indeed, the index of the numerically least term of the asymptotic expansion \eqref{eq3} is $n \approx \pi |a|$. Therefore, it is reasonable to choose the optimal $N \approx \pi|a|$, whereas the condition $s = o(|a|)$ has to be fulfilled in order to obtain proper approximations from \eqref{eq2}.

Paris \cite{Paris} studied in detail the Stokes phenomenon associated with the asymptotic expansion of $\zeta(s,a)$ by establishing an exponentially improved extension of \eqref{eq3}. In particular, he proved that if $\left\{ {N_k } \right\}_{k = 1}^\infty$ is an arbitrary sequence of positive integers and $s$ is any complex number such that $s\neq 1$ and $\Re(s) > 1 - 2N_k$ for each $k\geq 1$, then the Hurwitz zeta function has the exact expansion
\begin{gather}\label{eq25}
\begin{split}
\zeta(s,a) = \frac{1}{2}a^{ - s}  + \frac{{a^{1 - s} }}{{s - 1}} + a^{1 - s}  & \left( \sum\limits_{k = 1}^\infty  \sum\limits_{n = 1}^{N_k  - 1} {\left( { - 1} \right)^{n + 1} 2\frac{{(s)_{2n-1}}}{{\left( {2\pi a k} \right)^{2n} }}}  \right. \\ & \left. + \sum\limits_{k = 1}^\infty  {\left( { - 1} \right)^{N_k  + 1} 2\frac{{(s)_{2N_k  - 1}}}{{\left( {2\pi a k} \right)^{2N_k } }}\Pi _{s + 2N_k  - 1} ( 2\pi a k)}  \right),
\end{split}
\end{gather}
as long as $|\arg a|<\pi$. Paris' proof of \eqref{eq25} uses techniques based on Mellin--Barnes integrals. We will give an alternative proof of his result in Section \ref{sec2}. Note that in his paper, Paris uses a notation of the basic terminant which is different from ours. Applying Stirling's formula and Proposition \ref{prop3}, it can readily be shown that if the $N_k$'s are chosen so that $N_k  = \pi|a|k + \mathcal{O}(1)$, as $a\to \infty$, then the infinite series involving the basic terminants converges very rapidly in the closed sector $|\arg a|\leq \frac{\pi}{2}$, with its $k$th term behaving like $(|a|k)^{\Re(s) - 1} e^{ - 2\pi |a|k}$.

The remaining part of the paper is structured as follows. In Section \ref{sec2}, we prove the representations for the remainder term stated in Theorem \ref{thm1} and the convergent expansion \eqref{eq25}. In Section \ref{sec3}, we prove the error bounds given in Theorems \ref{thm2} and \ref{thm4}. Section 4 provides applications of our results to the asymptotic expansions of the polygamma functions, the gamma function, the Barnes $G$-function and the $s$-derivative of the Hurwitz zeta function $\zeta(s,a)$. The paper concludes with a discussion in Section \ref{sec5}.

\section{Proof of the representations for the remainder term}\label{sec2}

In this section, we prove the representations for the remainder $R_N(s,a)$ stated in Theorem \ref{thm1} and the convergent expansion \eqref{eq25}.

We begin by deriving formula \eqref{eq4}. It is known that the remainder term $R_N(s,a)$ may be expressed in the form
\begin{equation}\label{eq5}
R_N(s,a) = \frac{a^{s-1}}{\Gamma(s)}\int_0^{ + \infty } {\left( {\frac{1}{{e^u  - 1}} - \frac{1}{u} + \frac{1}{2} - \sum\limits_{n = 1}^{N - 1} {\frac{{B_{2n} }}{{\left( {2n} \right)!}}u^{2n - 1} } } \right)u^{s - 1} e^{ - au} du} ,
\end{equation}
provided that $|\arg a|<\frac{\pi}{2}$, $s\neq 1$ and $\Re(s) > 1 - 2N$ (cf. \cite[p. 24]{Magnus} or \cite[eq. 25.11.28]{NIST}). Temme \cite[eq. (3.26), p. 64]{Temme} showed that
\begin{equation}\label{eq15}
\frac{1}{{e^u  - 1}} - \frac{1}{u} + \frac{1}{2} - \sum\limits_{n = 1}^{N - 1} {\frac{{B_{2n} }}{{\left( {2n} \right)!}}u^{2n - 1} }  =  \left( { - 1} \right)^{N+1} 2 \sum\limits_{k = 1}^\infty  {\frac{{u^{2N - 1} }}{{\left( {u^2  + 4 \pi ^2 k^2 } \right)\left( {2\pi k} \right)^{2N - 2} }}} 
\end{equation}
for any $u>0$ and positive integer $N$. Suppose for a moment that $a$ is real and positive. Substitution of the right-hand side of \eqref{eq15} into \eqref{eq5} yields
\begin{align}
R_N ( s,a) & =  \left( { - 1} \right)^{N+1} \frac{2 a^{s-1}}{\Gamma(s)} \int_0^{ + \infty } {\sum\limits_{k = 1}^\infty  {\frac{{u^{s+2N - 2} e^{ - au} }}{{\left( {u^2  + 4 \pi ^2 k^2} \right)\left( {2\pi k} \right)^{2N - 2} }}du} } \nonumber
\\ & =  \left( { - 1} \right)^{N+1} \frac{2 a^{s-1}}{\Gamma(s)} \sum\limits_{k = 1}^\infty  \int_0^{ + \infty } { \frac{{u^{s+2N - 2} e^{ - au} }}{{\left( {u^2  + 4 \pi ^2 k^2} \right)\left( {2\pi k} \right)^{2N - 2} }}du } \nonumber
\\ & = \left( { - 1} \right)^{N + 1} \frac{2}{{\Gamma (s)}}\frac{1}{{\left( {2\pi a} \right)^{2N} }}\sum\limits_{k = 1}^\infty  {\int_0^{ + \infty } {\frac{{t^{s + 2N - 2} }}{{1 + (t/2\pi a)^2 }}\frac{{e^{ - kt} }}{{k^{1 - s} }}dt} } \label{eq22} \\
& = \left( { - 1} \right)^{N + 1} \frac{2}{{\Gamma (s)}}\frac{1}{{\left( {2\pi a} \right)^{2N} }}\int_0^{ + \infty } {\frac{{t^{s + 2N - 2} }}{{1 + (t/2\pi a)^2 }}\sum\limits_{k = 1}^\infty  {\frac{{e^{ - kt} }}{{k^{1 - s} }}} dt} \nonumber \\ 
& = \left( { - 1} \right)^{N + 1} \frac{2}{{\Gamma (s)}}\frac{1}{{\left( {2\pi a} \right)^{2N} }}\int_0^{ + \infty } {\frac{{t^{s + 2N - 2} }}{{1 + (t/2\pi a)^2 }}\operatorname{Li}_{1 - s} (e^{ - t} )dt} . \nonumber
\end{align}
In passing to the third equality, we have performed the change of integration variable from $u$ to $t$ by $t = a u/ k$. The changes in the orders of summation and integration are permitted because of absolute convergence. Now an analytic continuation argument shows that this result is valid for any complex $a$ satisfying $|\arg a|<\frac{\pi}{2}$, and the proof of the representation \eqref{eq4} is complete.

The representation \eqref{eq33} can be proved as follows. By making the change of variable from $t$ to $v$ via $v = kt$ in \eqref{eq22} and employing the integral representation \eqref{eq150} of the basic terminant, we find 
\begin{align}
R_N (s,a) & = \left( { - 1} \right)^{N + 1} \frac{2}{\Gamma(s)}\frac{1}{{(2\pi a)^{2N} }}\sum\limits_{k = 1}^\infty  {\frac{1}{{k^{2N} }}\int_0^{ + \infty } {\frac{{v^{s + 2N - 2} e^{ - v} }}{{1 + (v/2\pi ak)^2 }}dv} } \label{eq61} \\ &
= \left( { - 1} \right)^{N + 1} 2\frac{ ( s)_{2N-1}}{{\left( {2\pi a} \right)^{2N} }}\sum\limits_{k = 1}^\infty  {\frac{{\Pi _{s + 2N - 1} ( 2\pi ak)}}{{k^{2N} }}}. \label{eq60}
\end{align}
Since $\Pi _p (w) = \mathcal{O}(1)$ as $w\to \infty$ in the sector $|\arg w|<\pi$, by analytic continuation, the representation \eqref{eq60} is valid in a wider range than \eqref{eq61} (or, equivalently, \eqref{eq4}), namely in $|\arg a|<\pi$. Alternatively, \eqref{eq33} can be deduced from \eqref{eq25} by setting $N_k=N$ for each $k\geq 1$.

We now turn to the proof of formula \eqref{eq88}. Our starting point is the integral representation
\[
R_N(s,a) =  - \frac{(s)_{2N - 1}}{{\left( {2N-1} \right)!}} a^{s-1} \int_0^{ + \infty } {\frac{{B_{2N - 1} ( t - \left\lfloor t \right\rfloor)}}{{\left( {t + a} \right)^{s + 2N - 1} }}dt} ,
\]
which is valid when $a>0$, $s\neq 1$ and $\Re(s) > 2 - 2N$ (cf. \cite[eq. 25.11.7]{NIST}). Integrating once by parts, we obtain
\begin{align}
R_N (s,a) & = \frac{B_{2N} }{\left( 2N \right)!}\frac{(s)_{2N - 1}}{a^{2N}} - \frac{(s)_{2N}}{\left( 2N \right)!} a^{s-1} \int_0^{ + \infty } \frac{B_{2N} ( t - \left\lfloor t \right\rfloor )}{\left( {t + a} \right)^{s + 2N} }dt \label{eq112} \\ & = \frac{(s)_{2N}}{\left( 2N \right)!} a^{s-1} \int_0^{ + \infty } \frac{B_{2N}  - B_{2N} ( t - \left\lfloor t \right\rfloor )}{\left( {t + a} \right)^{s + 2N} }dt . \label{eq101}
\end{align}
An analytic continuation argument then allows one to conclude that \eqref{eq101} is valid under the more general conditions $|\arg a|<\pi$, $s\neq 1$ and $\Re (s) > 1 - 2N$. This finishes the proof of formula \eqref{eq88}.

We close this section by proving the convergent expansion \eqref{eq25}. For this purpose, suppose that $|\arg a|<\frac{\pi}{2}$, $s\neq 1$ and $\Re(s) > -1$. Under these assumptions, formula \eqref{eq61} applies and gives
\begin{equation}\label{eq45}
R_1 (s,a) = \frac{2}{\Gamma (s)}\frac{1}{(2\pi a)^2}\sum\limits_{k = 1}^\infty \frac{1}{k^2}\int_0^{ + \infty } \frac{v^s e^{ - v}}{1 + (v/2\pi ak)^2}dv .
\end{equation}
Now let $\left\{ {N_k } \right\}_{k = 1}^\infty$ be an arbitrary sequence of positive integers. We can expand the integrands in \eqref{eq45} by applying the geometric series:
\[
\frac{1}{1 + (v/2\pi ak)^2} = \sum\limits_{n = 1}^{N_k  - 1} {\left( { - 1} \right)^{n + 1} \left( \frac{v}{2\pi ak} \right)^{2n - 2} }  + \left( { - 1} \right)^{N_k  + 1} \frac{1}{1 + (v/2\pi ak)^2}\left( \frac{v}{2\pi ak} \right)^{2N_k  - 2} .
\]
Substituting these expressions into \eqref{eq45} and using the the integral representation \eqref{eq150} of the basic terminant, we deduce
\begin{equation}\label{eq81}
R_1 (s,a) = \sum\limits_{k = 1}^\infty  \sum\limits_{n = 1}^{N_k  - 1} \left( { - 1} \right)^{n + 1} 2\frac{(s)_{2n - 1}}{(2\pi ak)^{2n}} + \sum\limits_{k = 1}^\infty  \left( - 1 \right)^{N_k  + 1} 2\frac{(s)_{2N_k  - 1}}{(2\pi ak)^{2N_k }}\Pi _{s + 2N_k  - 1} (2\pi ak) .
\end{equation}
By appealing to analytic continuation, this expansion is valid under the more general assumptions $|\arg a|<\pi$, $s\neq 1$ and $\Re (s) > 1 - 2N_k$ for each $k\geq 1$. By the definition \eqref{eq2} of the remainder term $R_1 (s,a)$, the expansion \eqref{eq81} is seen to be equivalent to \eqref{eq25}.

\section{Proof of the error bounds}\label{sec3}

In this section, we prove the bounds for the remainder term $R_N (s,a)$ given in Theorems \ref{thm2} and \ref{thm4}.

We begin with the proof of the bound \eqref{eq12}. Suppose that $N$ is a positive integer and $s$ is an arbitrary complex number such that $s \neq 1$ and $\Re(s)>1-2N$. From \eqref{eq33} we infer that
\[
\left| R_N (s,a) \right| \le 2\frac{\left| (s)_{2N - 1} \right|}{(2\pi \left| a \right|)^{2N} }\sum\limits_{k = 1}^\infty \frac{\left| {\Pi _{s + 2N - 1} (2\pi ak)} \right|}{k^{2N}}  \le 2\frac{\left| {(s)_{2N - 1} } \right|}{(2\pi \left| a \right|)^{2N}}\sum\limits_{k = 1}^\infty  \frac{1}{k^{2N}} \mathop {\sup }\limits_{r \ge 1} \left| \Pi _{s + 2N - 1} (2\pi ar) \right|,
\]
which, after using the well-known identity
\begin{equation}\label{eq37}
B_{2N} = ( - 1)^{N + 1} \frac{2(2N)!}{(2\pi )^{2N}}\sum\limits_{k = 1}^\infty \frac{1}{k^{2N}},
\end{equation}
simplifies to the required result \eqref{eq12}.

We proceed by proving formula \eqref{eq46}. Note that $0 < \Pi _p (w) < 1$ whenever $w > 0$ and $p > 0$ (see Proposition \ref{prop1}). Therefore, employing the representation \eqref{eq33} and observing that the terms of the infinite sum in \eqref{eq33} are all positive, we can assert that
\[
R_N (s,a) = ( - 1)^{N + 1} 2\frac{(s)_{2N - 1}}{(2\pi a)^{2N}}\sum\limits_{k = 1}^\infty \frac{1}{k^{2N}} \theta _N (s,a),
\]
for some $0<\theta _N (s,a)<1$ depending on $s$, $a$ and $N$. The identity \eqref{eq37} then shows that this expression is equivalent to the required one in \eqref{eq46}.

The bound \eqref{eq111} can be proved as follows. It is easy to see that, for any positive real $t$ and complex $a$ with $|\arg a|<\pi$, the following two inequalities hold:
\begin{equation}\label{eq810}
\frac{1}{\big| \left( t + a \right)^{s + 2N}  \big|} = \frac{1}{\left| {t + a} \right|^{\Re \left( s \right) + 2N}}e^{\Im \left( s \right)\arg \left( {t + a} \right)}  \le \frac{1}{\left| {t + a} \right|^{\Re \left( s \right) + 2N}}\max (1,e^{\Im \left( s \right)\arg a} )
\end{equation}
and
\begin{equation}\label{eq844}
\left| t + a \right|^2  = t^2  + \left| a \right|^2  + 2t\left| a \right|\cos (\arg a) = (t + \left| a \right|)^2  + 4t\left| a \right|\sin ^2 \Big( {\frac{\arg a}{2}} \Big) \ge (t + \left| a \right|)^2 \cos ^2 \Big( {\frac{\arg a}{2}} \Big).
\end{equation}
Also, by the Fourier series of the Bernoulli polynomials \cite[eq. 24.8.1]{NIST}, we have
\begin{equation}\label{eq811}
\left( { - 1} \right)^{N + 1} (B_{2N}  - B_{2N} (t - \left\lfloor t \right\rfloor )) = \frac{{2(2N)!}}{{(2\pi )^{2N} }}\sum\limits_{k = 1}^\infty  {\frac{{1 - \cos (2\pi kt)}}{{k^{2N} }}}  \ge 0
\end{equation}
for any $t>0$. Consequently, if $N \geq 1$ is fixed, the function $B_{2N}  - B_{2N} (t - \left\lfloor t \right\rfloor )$ does not change sign. From \eqref{eq88}, using \eqref{eq810}--\eqref{eq811}, we can conclude that
\begin{align}
\left| {R_N (s,a)} \right| & \le \frac{{\left| {(s)_{2N} } \right|}}{(2N)!}\left| {a^{s - 1} } \right|\int_0^{ + \infty } {\frac{{\left| {B_{2N}  - B_{2N} (t - \left\lfloor t \right\rfloor )} \right|}}{{\left| {(t + a)^{s + 2N} } \right|}}dt} \nonumber
\\ & \le \frac{{\left| {(s)_{2N} } \right|}}{(2N)!}\left| {a^{s - 1} } \right|\left| {\int_0^{ + \infty } {\frac{{B_{2N}  - B_{2N} (t - \left\lfloor t \right\rfloor )}}{{\left| {t + a} \right|^{\Re \left( s \right) + 2N} }}dt} } \right|\max (1,e^{\Im (s)\arg a} ) \nonumber
\\ & \le \frac{{\left| {(s)_{2N} } \right|}}{(2N)!}\left| {a^{s - 1} } \right|\left| {\int_0^{ + \infty } {\frac{{B_{2N}  - B_{2N} (t - \left\lfloor t \right\rfloor )}}{{(t + \left| a \right|)^{\Re \left( s \right) + 2N} }}dt} } \right|\sec ^{\Re \left( s \right) + 2N} \Big( {\frac{{\arg a}}{2}} \Big)\max (1,e^{\Im (s)\arg a} ) \nonumber
\\ & = \frac{{\left| {(s)_{2N} } \right|}}{{(\Re (s))_{2N} }}\frac{{\left| {a^{s - 1} } \right|}}{{\left| a \right|^{\Re (s) - 1} }}\left| {R_N (\Re (s),\left| a \right|)} \right|\sec ^{\Re \left( s \right) + 2N} \Big( {\frac{{\arg a}}{2}} \Big) \max (1,e^{\Im (s)\arg a} ) \nonumber
\\ & = \frac{{\left| {(s)_{2N} } \right|}}{{(\Re (s))_{2N} }}e^{ - \Im (s)\arg a} \left| {R_N (\Re (s),\left| a \right|)} \right|\sec ^{\Re \left( s \right) + 2N} \Big( {\frac{{\arg a}}{2}} \Big)\max (1,e^{\Im (s)\arg a} ) \nonumber
\\ & = \frac{{\left| {(s)_{2N} } \right|}}{{(\Re (s))_{2N} }}\left| {R_N (\Re (s),\left| a \right|)} \right|\sec ^{\Re \left( s \right) + 2N} \Big( {\frac{{\arg a}}{2}} \Big) \max (1,e^{ - \Im (s)\arg a} ). \label{eq55}
\end{align}
By Theorem \ref{thm4}, the factor $\left| {R_N (\Re (s),\left| a \right|)} \right|$ may be bounded as follows:
\[
\left| R_N (\Re (s),\left| a \right|) \right| \le \frac{\left| {B_{2N} } \right|}{(2N)!}\frac{(\Re (s))_{2N - 1} }{\left| a \right|^{2N} } = \frac{\left| {B_{2N} } \right|}{(2N)!}\frac{\left| (s)_{2N - 1} \right|}{\left| a \right|^{2N} }\frac{(\Re (s))_{2N}}{\left| {(s)_{2N} } \right|}\frac{\left| {s + 2N - 1} \right|}{\Re (s) + 2N - 1}.
\]
Substituting this inequality into \eqref{eq55} yields the prescribed estimate \eqref{eq111}.

We close this section by proving the estimate \eqref{eq113}. It follows from the first equality in \eqref{eq844} that for any positive real $t$ and complex $a$ with $|\arg a|\leq\frac{\pi}{2}$, the inequality $\left| t+a \right|^2  \ge t^2+\left| a \right|^2$ holds true. We also have $\left| B_{2N} (t - \left\lfloor t \right\rfloor ) \right| \le \left| B_{2N} \right|$ for any $t>0$ \cite[eq. 24.9.1]{NIST}. Applying these inequalities and \eqref{eq810} in \eqref{eq112}, we deduce that
\begin{align*}
\left| R_N (s,a) \right| & \le \frac{\left| {B_{2N} } \right|}{(2N)!}\frac{\left| (s)_{2N - 1}  \right|}{\left| a \right|^{2N}} + \frac{\left| {B_{2N} } \right|}{(2N)!}\left| (s)_{2N} \right|\left| a^{s - 1} \right|\int_0^{ + \infty } \frac{dt}{\left| {t + a} \right|^{\Re (s) + 2N}} \max (1,e^{\Im (s)\arg a} )
\\ & \le \frac{\left| B_{2N} \right|}{(2N)!}\frac{\left| {(s)_{2N - 1} } \right|}{\left| a \right|^{2N} } + \frac{\left| B_{2N}  \right|}{(2N)!}\left| (s)_{2N} \right|\left| a^{s - 1} \right|\int_0^{ + \infty } \frac{dt}{(t^2  + \left| a \right|^2 )^{\frac{\Re (s) + 2N}{2}}} \max (1,e^{\Im (s)\arg a} )
\\ & = \frac{\left| B_{2N} \right|}{(2N)!}\frac{\left| (s)_{2N - 1} \right|}{\left| a \right|^{2N}} + \frac{\left| {B_{2N} } \right|}{(2N)!}\frac{\left| {(s)_{2N} } \right|}{\left| a \right|^{2N}}\frac{\left| {a^s } \right|}{\left| a \right|^{\Re (s)}}\frac{1}{2}\int_0^{ + \infty } \frac{u^{ - \frac{1}{2}}}{(u + 1)^{\frac{\Re (s) + 2N}{2}} }du \max (1,e^{\Im (s)\arg a} )
\\ & = \frac{\left| B_{2N} \right|}{(2N)!}\frac{\left| (s)_{2N - 1} \right|}{\left| a \right|^{2N}} + \frac{\left| {B_{2N} } \right|}{(2N)!}\frac{\left| (s)_{2N}  \right|}{\left| a \right|^{2N}}e^{ - \Im (s)\arg a} \frac{\pi ^{\frac{1}{2}}}{2}\frac{\Gamma \big( \frac{\Re (s) + 2N - 1}{2} \big)}{\Gamma \big( \frac{\Re (s) + 2N}{2} \big)}\max (1,e^{\Im (s)\arg a} )
\\ & = \frac{\left| B_{2N} \right|}{(2N)!}\frac{\left| (s)_{2N - 1} \right|}{\left| a \right|^{2N}} + \frac{\left| B_{2N}  \right|}{(2N)!}\frac{\left| (s)_{2N - 1} \right|}{\left| a \right|^{2N}}\frac{\left| {s + 2N - 1} \right|}{\Re (s) + 2N - 1}\chi (\Re (s) + 2N - 1)\max (1,e^{ - \Im (s)\arg a} ),
\end{align*}
which is equivalent to \eqref{eq113}. In arriving at the final result, we have made the change of integration variable from $t$ to $u$ by $u = t^2 /\left| a \right|^2$ and used the known integral representation of the beta function \cite[eq. 5.12.3]{NIST} and the definition of $\chi(p)$.

\section{Application to related functions}\label{sec4}

In this section, we show how the remainder terms of the known asymptotic expansions of the polygamma functions, the gamma function, the Barnes $G$-function and the $s$-derivative of the Hurwitz zeta function $\zeta(s,a)$ may be expressed in terms of the remainder $R_N(s,a)$. Thus, the theorems in Section \ref{sec1} can be applied to obtain representations and bounds for the error terms of these asymptotic expansions. Some of the resulting representations and bounds are well-known in the literature but many of them, we believe, are new.

The asymptotic expansion of the digamma function $\psi(z)=\frac{d}{dz}\log \Gamma(z)$ may be written
\begin{equation}\label{eq107}
\psi(z) = \log z - \frac{1}{{2z}} - \sum\limits_{n = 1}^{N - 1} {\frac{B_{2n}}{2n}\frac{1}{z^{2n}}}  + R_N^{(\psi)} (z),
\end{equation}
where $N$ is any positive integer and $R_N^{(\psi)} (z)=\mathcal{O}_N(|z|^{-2N})$ as $z\to \infty$ in the sector $|\arg z|\leq \pi-\delta$, with $\delta>0$ being fixed (cf. \cite[p. 18]{Magnus} or \cite[eq. 5.11.2]{NIST}). (Throughout this section, we use subscripts in the $\mathcal{O}$ notations to indicate the dependence of the implied constant on certain parameters.) Employing the known limit \cite[p. 271]{WW}
\[
\psi(z) = \mathop {\lim }\limits_{s \to 1 + 0} \left( \frac{1}{s - 1} - \zeta(s,z) \right) 
\]
for the right-hand side of \eqref{eq2} and comparing the result with \eqref{eq107}, we obtain the following relation:
\begin{equation}\label{eq31}
R_N^{(\psi )} (z) =  - R_N (1,z).
\end{equation}
A combination of \eqref{eq31} and \eqref{eq111}, for example, reproduces the known bound for $R_N^{(\psi )} (z)$ (cf. \cite[\S 5.11(ii)]{NIST} or \cite[exer. 4.2, p. 295]{Olver}).

The polygamma function $\psi^{(k)}(z)=\frac{d^k}{dz^k}\psi(z)$, with $k$ being any fixed positive integer, has the asymptotic expansion
\[
\left( { - 1} \right)^{k + 1} \psi ^{(k)} (z) = \frac{\Gamma (k+1)}{2}\frac{1}{{z^{k + 1} }} + \sum\limits_{n = 0}^{N - 1} {\frac{{B_{2n} }}{(2n)!}\frac{{\Gamma ( k + 2n )}}{{z^{k+2n} }}}  + R_N^{(\psi ^{(k)})}(z),
\]
for any $N\geq 1$, where $R_N^{(\psi^{(k)})} (z)=\mathcal{O}_{N,k}(|z|^{-k-2N})$ as $z\to \infty$ in the sector $|\arg z|\leq \pi-\delta$, with any fixed $\delta>0$ (see, for instance, \cite[p. 18]{Magnus}). The functional relation \cite[eq. 25.11.12]{NIST}
\[
( - 1)^{k + 1} \psi ^{(k)} (z) = k!\zeta (k + 1,z)
\]
implies that
\[
R_N^{(\psi ^{(k)})}(z) = k!z^{ - k} R_N (k + 1,z).
\]

The standard asymptotic expansion of the logarithm of the gamma function takes the form
\begin{equation}\label{eq52}
\log \Gamma (z) = \left( {z - \frac{1}{2}} \right)\log z - z + \frac{1}{2}\log (2\pi ) + \sum\limits_{n = 1}^{N - 1} {\frac{{B_{2n} }}{{2n(2n - 1)}}\frac{1}{{z^{2n - 1} }}}  + R_N^{(\Gamma )} (z),
\end{equation}
where $N$ is any positive integer and $R_N^{(\Gamma )} (z)=\mathcal{O}_N(|z|^{-2N+1})$ as $z\to \infty$ in the sector $|\arg z|\leq \pi-\delta$, with $\delta>0$ being fixed (see, e.g., \cite[p. 12]{Magnus} or \cite[eq. 5.11.1]{NIST}). The asymptotic expansion \eqref{eq52} is usually attributed to Stirling however, it was first discovered by De Moivre (for a detailed historical account, see \cite[pp. 482--483]{Anders}). If we substitute \eqref{eq2} into the right-hand side of the equality \cite[eq. 25.11.18]{NIST}
\[
\log \Gamma (z) = \frac{1}{2}\log (2\pi ) + \left. {\frac{{\partial \zeta (s,z)}}{{\partial s}}} \right|_{s = 0} 
\]
and compare the result with \eqref{eq52}, we find that
\begin{equation}\label{eq51}
R_N^{(\Gamma )} (z) = \lim _{s \to 0} (zR_N (s,z)/s).
\end{equation}
A combination of \eqref{eq51} and \eqref{eq111}, for example, reproduces Stieltjes' error bound for the asymptotic expansion of $\log \Gamma (z)$ (cf. \cite[\S 5.11(ii)]{NIST} or \cite[eq. (4.06), p. 294]{Olver}). Other known estimates for $R_N^{(\Gamma)}(z)$ including those of Lindel\"{o}f \cite{Lindelof}, F. W. Sch\"{a}fke and A. Sattler \cite{Schafke}, and Guari\u{\i} \cite{Gurarii} can all be deduced as direct consequences of \eqref{eq51}, \eqref{eq12} and the bounds given in Appendix \ref{appendixa}. 

The $G$-function, introduced and studied originally by Barnes \cite{Barnes1}, has the asymptotic expansions
\begin{gather}\label{eq251}
\begin{split}
\log G(z+1) = \frac{1}{4}z^2  + z\log \Gamma (z + 1) & - \left( {\frac{1}{2}z^2  + \frac{1}{2}z + \frac{1}{{12}}} \right)\log z - \log A \\ & + \sum\limits_{n = 1}^{N - 1} {\frac{{B_{2n + 2} }}{{2n(2n + 1)(2n + 2)}}\frac{1}{z^{2n}}}  + R_{N}^{(G1)}(z)
\end{split}
\end{gather}
and
\begin{equation}\label{eq252}
\log G(z+1) =  - \frac{3}{4}z^2  + \frac{z}{2}\log (2\pi) + \left( {\frac{1}{2}z^2  - \frac{1}{{12}}} \right)\log z + \frac{1}{{12}} - \log A + \sum\limits_{n = 1}^{N - 1} {\frac{{B_{2n + 2} }}{{2n( 2n + 2)}}\frac{1}{{z^{2n} }}}  + R_{N}^{(G2)}(z),
\end{equation}
for any $N\geq 1$, where $R_{N}^{(G1)}(z)$ and $R_{N}^{(G2)}(z)$ are both $\mathcal{O}_N(|z|^{-2N})$ as $z\to \infty$ in the sector $|\arg z|\leq \pi-\delta$, with any fixed $\delta>0$. The asymptotic expansion \eqref{eq251} was established by Ferreira and L\'{o}pez \cite{Ferreira}, whereas \eqref{eq252} is due to Barnes \cite{Barnes2}. The Barnes $G$-function is related to the Hurwitz zeta function through the functional equation \cite[eq. (2.2)]{Nemes}
\begin{equation}\label{eq346}
\log G(z + 1) = z\log \Gamma (z + 1)  - z\log z + \frac{1}{{12}} - \log A - \left. {\frac{{\partial \zeta (s,z)}}{{\partial s}}} \right|_{s =  - 1} ,
\end{equation}
which, after using \eqref{eq2}, gives
\begin{equation}\label{eq347}
R_N^{(G1)} (z) = \mathop {\lim }\limits_{s \to  - 1} (z^2 R_{N + 1} (s,z)/s(s + 1)) .
\end{equation}
An application of the identity $\log \Gamma (z + 1) = \log \Gamma (z) + \log z$ and the expansion \eqref{eq52} for the right-hand side of \eqref{eq251} shows that
\[
R_N^{(G2)} (z) = R_N^{(G1)} (z) + zR_{N + 1}^{(\Gamma )} (z).
\]
Explicit expressions and bounds for the remainder term $R_N^{(G1)} (z)$ were given recently by the present author \cite{Nemes}. The results of the paper \cite{Nemes} are special cases or consequences of the relation \eqref{eq347} and Theorems \ref{thm1}--\ref{thm4}.

Elizalde \cite{Elizalde} studied the large-$a$ asymptotic behaviour of the $s$-derivative of the Hurwitz zeta function $\zeta(s,a)$ evaluated at negative integer values of $s$. In particular, he showed that
\[
\left. {\frac{{\partial \zeta (s,a)}}{{\partial s}}} \right|_{s =  - 1} = - \frac{1}{4}a^2  + \left( {\frac{1}{2}a^2  - \frac{1}{2}a + \frac{1}{{12}}} \right)\log a + \frac{1}{{12}} - \sum\limits_{n = 1}^{N - 1} {\frac{{B_{2n + 2} }}{{2n(2n + 1)(2n + 2)}}\frac{1}{{a^{2n} }}}  + R_N^{(\partial \zeta)} (-1,a)
\]
and
\begin{gather}\label{eq348}
\begin{split}
\left. {\frac{{\partial \zeta (s,a)}}{{\partial s}}} \right|_{s =  - 2} =  - \frac{1}{9}a^3  + \frac{1}{{12}}a & + \left( {\frac{1}{3}a^3  - \frac{1}{2}a^2  + \frac{1}{6}a} \right)\log a \\ &+ \sum\limits_{n = 1}^{N - 1} {\frac{{2B_{2n + 2} }}{{(2n - 1)2n(2n + 1)(2n + 2)}}\frac{1}{{a^{2n - 1} }}}  + R_N^{(\partial \zeta)} (-2,a),
\end{split}
\end{gather}
where $N$ is any positive integer and $R_N^{(\partial \zeta)} (-1,a)=\mathcal{O}_N(|a|^{-2N})$, $R_N^{(\partial \zeta)} (-2,a)=\mathcal{O}_N(|a|^{-2N+1})$ as $a\to \infty$ in the sector $|\arg a|\leq \frac{\pi}{2}-\delta$, with $\delta>0$ being fixed. From \eqref{eq346} and \eqref{eq347} we infer that
\[
R_N^{(\partial \zeta)} (-1,a) = -R_N^{(G1)} (a) = -\mathop {\lim }\limits_{s \to  - 1} (a^2 R_{N + 1} (s,a)/s(s + 1)) .
\]
Differentiating the right-hand side of \eqref{eq2} with respect to $s$ and comparing the result with \eqref{eq348}, we obtain
\[
R_N^{(\partial \zeta)} (-2,a) = \mathop {\lim }\limits_{s \to  - 2} (2a^3 R_{N + 1} (s,a)/s(s + 1)(s + 2)) ,
\]
provided $N\geq 2$. An immediate consequence of these relations is that Elizalde's expansions are actually valid in the wider sector $|\arg a|\leq \pi-\delta$, with any fixed $\delta>0$.

\section{Discussion}\label{sec5}

In this paper, we have derived new representations and estimates for the remainder term of the asymptotic expansion of the Hurwitz zeta function. In this section, we shall discuss the sharpness of our error bounds.

First, we consider the bound \eqref{eq12} with $s$ being real. In particular, the asymptotic expansions discussed in Section \ref{sec4} belong to this case. Let $N$ be any non-negative integer, $s$ a real number and $a$ a complex number. Suppose that $s \neq 1$, $s>1-2N$ and $|\arg a| < \pi$. Under these assumptions, it follows from Theorem \ref{thm2} and Propositions \ref{prop1}, \ref{prop3} and \ref{prop4} that
\begin{equation}\label{eq892}
\left| R_N (s,a) \right| \le \frac{\left| B_{2N} \right|}{(2N)!}\frac{\left| {(s)_{2N - 1} } \right|}{\left| a \right|^{2N}} \times \begin{cases} 1 & \text{ if } \; \left|\arg a\right| \leq \frac{\pi}{4}, \\ \min\Big(\left|\csc ( 2\arg a)\right|,1 + \cfrac{1}{2}\chi(s + 2N - 1)\Big) & \text{ if } \; \frac{\pi}{4} < \left|\arg a\right| \leq \frac{\pi}{2}, \\ \cfrac{\sqrt {2\pi (s + 2N - 1)} }{2\left| {\sin (\arg a)} \right|^{s + 2N - 1} } + 1 + \cfrac{1}{2}\chi (s + 2N - 1) & \text{ if } \; \frac{\pi}{2} < \left|\arg a\right| < \pi. \end{cases}
\end{equation}

By the definition of an asymptotic expansion, $\lim _{a \to \infty } \left| a \right|^{2N} \left| R_N (s,a) \right| = \frac{\left| B_{2N}  \right|}{(2N)!}\left| (s)_{2N - 1}  \right|$ for any fixed $N \geq 0$. Therefore, when $|\arg a| \leq \frac{\pi}{4}$, the estimate \eqref{eq892} and hence our error bound \eqref{eq12} cannot be improved in general.

Consider now the case when $\frac{\pi}{4} < \left|\arg a\right| \leq \frac{\pi}{2}$. The bound \eqref{eq892} is reasonably sharp as long as $\left|\csc ( 2\arg a)\right|$ is not very large, i.e., when $|\arg a|$ is bounded away from $\frac{\pi}{2}$. As $|\arg a|$ approaches $\frac{\pi}{2}$, the factor $\left|\csc ( 2\arg a)\right|$ grows indefinitely and therefore it has to be replaced by $1 + \frac{1}{2}\chi(s + 2N - 1)$. By Stirling's formula,
\begin{equation}\label{eq772}
\chi(s + 2N - 1) \sim \left( \frac{\pi (s + 2N - 1)}{2} \right)^{\frac{1}{2}}
\end{equation}
as $N\to +\infty$, and therefore the appearance of this factor in the bound \eqref{eq892} may give the impression that this estimate is unrealistic for large $N$. However, this is not the case, as the following argument shows. We assume that $s$ is fixed, $|s+2N-1 - |2\pi a||$ is bounded, $|\arg a| = \frac{\pi}{2}$ and $N$ is large. The saddle point method applied to \eqref{eq912} shows that when $|\arg w| = \frac{\pi}{2}$ and $|p - |w||$ is bounded, the asymptotics
\begin{equation}\label{eq771}
\left| \Pi _p (w) \right| \sim \frac{1}{2}\left( \frac{\pi p}{2} \right)^{\frac{1}{2}} ,\quad \left| \Pi _p (kw)\right| = o(\left| \Pi _p (w) \right|) \quad k \in \mathbb{Z}_{>1}
\end{equation}
hold as $p \to +\infty$. Combining the estimates \eqref{eq772}--\eqref{eq771} with the representation \eqref{eq33} yields
\begin{align*}
\left| {R_N (s,a)} \right| & \sim 2\frac{\left| {(s)_{2N - 1} } \right|}{\left| {2\pi a} \right|^{2N} }\left| \Pi _{s + 2N - 1} (2\pi a) \right| \sim \frac{\left| {B_{2N} } \right|}{(2N)!}\frac{\left| {(s)_{2N - 1} } \right|}{\left| a \right|^{2N} }\left| \Pi _{s + 2N - 1} (2\pi a) \right|
\\ & \sim \frac{\left| {B_{2N} } \right|}{(2N)!}\frac{\left| {(s)_{2N - 1} } \right|}{\left| a \right|^{2N}}\frac{1}{2}\left( \frac{\pi (s + 2N - 1)}{2} \right)^{\frac{1}{2}} \sim \frac{\left| {B_{2N} } \right|}{(2N)!}\frac{\left| {(s)_{2N - 1} } \right|}{\left| a \right|^{2N}} \left( 1 + \frac{1}{2}\chi(s + 2N - 1) \right) .
\end{align*}
Consequently, when $|\arg a|$ is equal or smaller but close to $\frac{\pi}{2}$, the estimate \eqref{eq892} and thus the error bound \eqref{eq12} cannot be improved in general.

Finally, assume that $\frac{\pi}{2}<|\arg a| <\pi$. Elementary analysis shows that the factor
\[
\frac{\sqrt {2\pi (s + 2N - 1)} }{2\left| {\sin (\arg a)} \right|^{s + 2N - 1} } + 1 + \frac{1}{2}\chi (s + 2N - 1),
\]
as a function of $N$, remains bounded, provided that $|\arg a| - \frac{\pi}{2} = \mathcal{O}(N^{-\frac{1}{2}})$. This gives a reasonable estimate for
the remainder term $R_N (s,a)$. Otherwise, $|\sin (\arg a)|^{s + 2N - 1}$ can take very small values when $N$ is large, making the bound \eqref{eq892} completely unrealistic in most of the sectors $\frac{\pi}{2}<|\arg a| <\pi$. This deficiency of the bound \eqref{eq892} (and \eqref{eq12}) is necessary and is due to the omission of certain exponentially small terms arising from the Stokes phenomenon related to the asymptotic expansion \eqref{eq3} of the Hurwitz zeta function (for a detailed discussion, see \cite{Paris}). Thus, the use of the asymptotic expansion \eqref{eq3} should be confined to the sector $|\arg a|\leq \frac{\pi}{2}$. For other ranges of $\arg a$, one should use the analytic continuation formulae
\begin{equation}\label{eq778}
R_N (s,a) = R_N (s,ae^{ \pm \pi i} ) + e^{ \pm \frac{\pi}{2}is} \frac{(2\pi )^s }{\Gamma (s)}a^{s - 1} \operatorname{Li}_{1 - s} (e^{ \mp 2\pi ia} ),
\end{equation}
which can be derived from the representation \eqref{eq4} using the residue theorem.

Let us now turn our attention to the case when $s$ is allowed to be complex in \eqref{eq12}. Let $N$ be a positive integer and let $s$ and $a$ be complex numbers such that $s\neq 1$, $\Re(s)>1-2N$ and $|\arg a| < \frac{\pi}{2}$. Under these assumptions, it follows from Theorem \ref{thm2} and Propositions \ref{prop1} that
\begin{equation}\label{eq414}
\left| R_N (s,a) \right| \le \frac{\left| B_{2N} \right|}{(2N)!}\frac{\left| (s)_{2N - 1}\right|}{\left| a \right|^{2N}}\frac{\Gamma (\Re (s) + 2N - 1)}{\left| \Gamma (s + 2N - 1) \right|} \times \begin{cases} 1 & \text{ if } \; \left|\arg a\right| \leq \frac{\pi}{4}, \\ \left|\csc ( 2\arg a)\right| & \text{ if } \; \frac{\pi}{4} < \left|\arg a\right| < \frac{\pi}{2}. \end{cases}
\end{equation}
Now, we make the assumptions that $\Re(s) + 2N - 1 \to +\infty$ and $\Im(s) = o(N^{\frac{1}{2}})$ as $N\to + \infty$. With these provisos, it can easily be shown, using for example Stirling's formula, that the quotient of gamma functions in \eqref{eq414} is asymptotically $1$ for large $N$. Consequently, if $N$ is large, the estimates \eqref{eq414} and \eqref{eq12} are reasonably sharp in most of the sector $|\arg a| <\frac{\pi}{2}$. If the assumption $\Im(s) = o(N^{\frac{1}{2}})$ is replaced by the weaker condition $\Im(s) = \mathcal{O}(N^{\frac{1}{2}})$, the quotient in \eqref{eq414} is still bounded and hence, the estimates \eqref{eq414} and \eqref{eq12} are still relatively sharp. Otherwise, if $\Re(s) + 2N - 1$ is small and $\Im(s)$ is much larger than $N^{\frac{1}{2}}$, this quotient may grow exponentially fast in $|\Im(s)|$, which can make the bound \eqref{eq414} completely unrealistic. Therefore, if the asymptotic expansion \eqref{eq3} is truncated just before its numerically least term, i.e., when $N \approx \pi |a|$, the estimate \eqref{eq414} is reasonable in most of the sector $|\arg a| <\frac{\pi}{2}$ as long as $s = \mathcal{O}(|a|^{\frac{1}{2}})$. Although \eqref{eq3} is valid in the sense of generalised asymptotic expansions \cite[\S 2.1(v)]{NIST} provided $s = o(|a|)$, the stronger assumption $s = \mathcal{O}(|a|^{\frac{1}{2}})$ guarantees the rapid decay of the first several terms of \eqref{eq3} for large $a$.

When $|\arg a|$ is equal or smaller but close to $\frac{\pi}{2}$, the estimate \eqref{eq414} may be replaced by
\begin{gather}\label{eq779}
\begin{split}
& \left| R_N (s,a) \right| \le \frac{\left| B_{2N} \right|}{(2N)!}\frac{\left| (s)_{2N - 1}\right|}{\left| a \right|^{2N}} \\
& \times \left( \frac{1}{2} + \frac{\left| s + 2N - 1 \right|}{2(\Re (s) + 2N - 1)}\chi (\Re (s) + 2N - 1)\max (1,e^{ - \Im (s)\arg a} ) + \frac{\Gamma (\Re (s) + 2N - 1)}{2\left| \Gamma (s + 2N - 1) \right|} \right),
\end{split}
\end{gather}
which can be derived from Theorem \ref{thm2} and Proposition \ref{prop3}. If $s$ satisfies the requirements posed in the previous paragraph and $\Im(s)\arg a$ is non-negative, the factor in the second line of \eqref{eq779} is $\mathcal{O}(N^{\frac{1}{2}})$ for large $N$. By an argument similar to that used in the case of real $s$, it follows that \eqref{eq779} is a realistic bound. Otherwise, if $\Re(s) + 2N - 1$ is small and $\Im(s)$ is much larger than $N^{\frac{1}{2}}$ or $\Im(s)\arg a$ is negative and large, the right-hand side of the inequality \eqref{eq779} may be a serious overestimate of the actual error.

Similarly to the case of real $s$, for values of $a$ outside the closed sector $|\arg a|\leq \frac{\pi}{2}$, one should make use of the continuation formulae \eqref{eq778}.

We continue by discussing the bound \eqref{eq113}. Let $N$ be a positive integer, $s$ a real number and $a$ a complex number. Suppose that $s\neq 1$, $s > 1 - 2N$ and $|\arg a|\le \frac{\pi}{2}$. Under these requirements, the bound \eqref{eq113} takes the form
\[
\left| R_N (s,a)\right| \le \frac{\left| B_{2N} \right|}{(2N)!}\frac{\left| (s)_{2N - 1} \right|}{\left| a \right|^{2N}}(1+\chi (s + 2N - 1)).
\]
It is easy to see that the right-hand side is always larger than the right-hand side of the inequality \eqref{eq892}. Hence, the estimate \eqref{eq892} (and thus the error bound \eqref{eq12}) is superior over \eqref{eq113}. However, for complex values of $s$ with $\Im(s) \neq 0$, the bound \eqref{eq113} can be sharper than \eqref{eq414} and \eqref{eq779} since it does not involve the gamma function ratio which grows exponentially fast in $|\Im(s)|$.

We conclude this section with a brief discussion of the error bound \eqref{eq111}. With the same assumptions as those for the estimate \eqref{eq892}, the bound \eqref{eq111} simplifies to
\[
\left| R_N (s,a) \right| \le \frac{\left| B_{2N}  \right|}{(2N)!}\frac{\left| (s)_{2N - 1} \right|}{\left| a \right|^{2N}}\sec^{s + 2N} \Big( \frac{\arg a}{2} \Big) .
\]
It is seen that, in general, this bound is weaker than that given in \eqref{eq892} and hence it is mainly of theoretical interest. In the case that $s$ is complex, the estimate \eqref{eq111} is reasonably sharp as long as $\Im(s)\arg a$ is non-negative and $|\arg a|$ is small.

\section*{Acknowledgement}

The author's research was supported by a research grant (GRANT11863412/70NANB15H221) from the National Institute of Standards and Technology. The author appreciates the help of Dorottya Szir\'aki in improving the presentation of some parts of the paper.

\appendix

\section{Bounds for the basic terminant}\label{appendixa}

In this appendix, we prove some estimates for the absolute value of the basic terminant $\Pi _p (w)$ with $\Re(p) > 0$. These estimates depend only on $p$ and the argument of $w$ and therefore also provide bounds for the quantity $\mathop {\sup }\nolimits_{r \ge 1} \left| {\Pi _{s + 2N - 1} ( 2\pi ar)} \right|$ which appears in Theorem \ref{thm2}.

\begin{proposition}\label{prop1}
For any complex $p$ with $\Re(p) > 0$ it holds that
\begin{equation}\label{eq65}
\left| {\Pi _p (w)} \right| \le \frac{\Gamma (\Re (p))}{\left| \Gamma (p) \right|} \times \begin{cases} 1 & \text{ if } \; \left|\arg w\right| \leq \frac{\pi}{4}, \\ \left|\csc ( 2\arg w)\right| & \text{ if } \; \frac{\pi}{4} < \left|\arg w\right| < \frac{\pi}{2}. \end{cases}
\end{equation}
Moreover, when $w$ and $p$ are positive, we have $0<\Pi _p (w)<1$.
\end{proposition}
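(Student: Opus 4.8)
The plan is to work entirely from the integral representation \eqref{eq150} of the basic terminant, which in the present notation reads
\[
\Pi_p(w) = \frac{1}{\Gamma(p)}\int_0^{+\infty} \frac{v^{p-1}e^{-v}}{1+(v/w)^2}\,dv
\]
and is valid for $\Re(p)>0$ and $|\arg w|<\frac{\pi}{2}$ (exactly the range covered by the two branches of \eqref{eq65}); this is the same representation that already underlies the derivation of \eqref{eq33}. Taking absolute values inside the integral and using $|v^{p-1}|=v^{\Re(p)-1}$ for $v>0$, I obtain
\[
|\Pi_p(w)| \le \frac{1}{|\Gamma(p)|}\int_0^{+\infty} \frac{v^{\Re(p)-1}e^{-v}}{|1+(v/w)^2|}\,dv .
\]
Everything then reduces to a sharp lower bound for $|1+(v/w)^2|$ on the positive real axis that is uniform in $v$, since the remaining integral, once the denominator is bounded away from zero, evaluates to $\Gamma(\Re(p))$.

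To produce that lower bound I would write $w=|w|e^{i\phi}$ with $\phi=\arg w$ and substitute $x=v^2/|w|^2\ge 0$, so that $1+(v/w)^2=1+xe^{-2i\phi}$ and hence $|1+(v/w)^2|^2=1+2x\cos(2\phi)+x^2$. The behaviour of this quadratic in $x$ splits according to the sign of $\cos(2\phi)$. When $|\phi|\le\frac{\pi}{4}$ one has $\cos(2\phi)\ge 0$, so every term is non-negative and $|1+(v/w)^2|\ge 1$; this gives the first branch $|\Pi_p(w)|\le \Gamma(\Re(p))/|\Gamma(p)|$. When $\frac{\pi}{4}<|\phi|<\frac{\pi}{2}$ one has $\cos(2\phi)<0$, and minimising $1+2x\cos(2\phi)+x^2$ over $x\ge 0$ (the minimum occurring at the admissible point $x=-\cos(2\phi)>0$) yields the value $1-\cos^2(2\phi)=\sin^2(2\phi)$. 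Thus $|1+(v/w)^2|\ge|\sin(2\phi)|$ uniformly in $v$, which produces the factor $|\csc(2\arg w)|$ in the second branch.

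For the final assertion, when $w$ and $p$ are positive the integrand $v^{p-1}e^{-v}/(1+(v/w)^2)$ is strictly positive, so $\Pi_p(w)>0$; and since $1+(v/w)^2>1$ for every $v>0$, replacing the denominator by $1$ strictly increases the integral, giving $\Pi_p(w)<\Gamma(p)/\Gamma(p)=1$. I do not anticipate a genuine obstacle: the two points needing a line of care are confirming that \eqref{eq150} is valid throughout $|\arg w|<\frac{\pi}{2}$ (the denominator vanishes for real $v$ only when $w$ is purely imaginary, which is excluded) and checking that the minimiser of the quadratic lies in $x\ge0$ in the second case. Both are elementary, so the substance of the proof is precisely the clean minimisation that converts $|1+(v/w)^2|$ into $|\sin(2\arg w)|$.
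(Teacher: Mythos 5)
Your proposal is correct and follows essentially the same route as the paper: both start from the integral representation \eqref{eq150}, take absolute values, and reduce the claim to the pointwise lower bound $\bigl|1+(t/w)^2\bigr|\ge 1$ for $|\arg w|\le\frac{\pi}{4}$ and $\bigl|1+(t/w)^2\bigr|\ge|\sin(2\arg w)|$ for $\frac{\pi}{4}<|\arg w|<\frac{\pi}{2}$, which the paper asserts without proof and you verify by the elementary minimisation of $1+2x\cos(2\arg w)+x^2$ over $x\ge 0$. Your direct argument for $0<\Pi_p(w)<1$ when $w,p>0$ (strict positivity of the integrand and the strict bound $1+(t/w)^2>1$) is an equally valid substitute for the paper's appeal to the mean value theorem of integration.
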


We remark that it was shown by the author \cite{Nemes2} that $\left| {\Pi _p (w)} \right| \le \sqrt {\frac{e}{4}\left( {p + \frac{3}{2}} \right)}$ provided that $\frac{\pi }{4} < \left| {\arg w} \right| \le \frac{\pi }{2}$ and $p$ is real and positive, which improves on the bound \eqref{eq65} near $|\arg w|=\frac{\pi}{2}$.

\begin{proof} Our starting point is the integral representation \cite{Nemes2}
\begin{equation}\label{eq150}
\Pi _p (w) = \frac{1}{{\Gamma (p)}}\int_0^{ + \infty } {\frac{{t^{p - 1} e^{ - t} }}{{1 + (t/w)^2 }}dt} ,
\end{equation}
which is valid when $\left|\arg w\right|<\frac{\pi}{2}$ and $\Re(p)>0$. For $t \geq 0$, we have
\[
\bigg| {1 + \bigg(\frac{t}{w}\bigg)^2} \bigg| \ge \begin{cases} 1 & \text{ if } \; 0 \leq |\arg w| \leq \frac{\pi}{4}, \\ |\sin \left(2\arg w\right)| & \text{ if } \; \frac{\pi}{4} < |\arg w| < \frac{\pi}{2},\end{cases}
\]
and therefore
\[
\left| \Pi _p (w) \right| \le \frac{1}{\left| \Gamma (p) \right|}\int_0^{ + \infty } {\frac{{t^{\Re (p) - 1} e^{ - t} }}{{\left| {1 + (t/w)^2 } \right|}}dt}  \le \frac{\Gamma (\Re (p))}{\left| \Gamma (p) \right|} \times \begin{cases} 1 & \text{ if } \; \left|\arg w\right| \leq \frac{\pi}{4}, \\ \left|\csc ( 2\arg w)\right| & \text{ if } \; \frac{\pi}{4} < \left|\arg w\right| < \frac{\pi}{2}. \end{cases}
\]
In the case of positive $w$ and $p$, notice that $0 < 1/(1 + \left( {t/w} \right)^2)  < 1$ for any $t>0$. Therefore, the integral representation \eqref{eq150} combined with the mean value theorem of integration imply that $0<\Pi_p \left( w \right)<1$.
\end{proof}

\begin{proposition} For any complex $p$ with $\Re(p) > 0$, we have
\begin{equation}\label{eq441}
\left| \Pi_p (w) \right| \le \frac{1}{2}\sec ^{\Re(p)} (\arg w)\max (1,e^{\Im (p)\left(  \mp \frac{\pi }{2} - \arg w \right)} ) + \frac{1}{2}\max (1,e^{\Im (p)\left(  \pm \frac{\pi }{2} - \arg w \right)} )
\end{equation}
and
\begin{equation}\label{eq657}
\left| \Pi_p (w) \right| \le \frac{1}{2}\sec ^{\Re(p)} (\arg w)\max (1,e^{\Im (p)\left(  \mp \frac{\pi }{2} - \arg w \right)} ) + \frac{\Gamma (\Re (p))}{2\left| \Gamma (p) \right|},
\end{equation}
for $0 \le \pm\arg w < \frac{\pi }{2}$.
\end{proposition}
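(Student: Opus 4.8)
The plan is to rewrite $\Pi_p(w)$ as a sum of two Laplace-type integrals over the positive real axis, bound each integrand by elementary plane geometry, and then read the two inequalities off from two different estimates of the \emph{same} second integral. Write $\theta=\arg w$. Since the representation produced below makes the reflection symmetry $\Pi_p(\bar w)=\overline{\Pi_{\bar p}(w)}$ transparent, and since $\Re(\bar p)=\Re(p)$, $\Im(\bar p)=-\Im(p)$ and $|\Gamma(\bar p)|=|\Gamma(p)|$, it suffices to treat the upper-sign range $0\le\theta<\frac{\pi}{2}$; replacing $(w,p)$ by $(\bar w,\bar p)$ then yields the lower-sign range $-\frac{\pi}{2}<\theta\le 0$ with the roles of the two terms interchanged, which is exactly the $\mp/\pm$ alternation in the statement. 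First I would establish the working representation
\[
\Pi_p(w)=\frac12\int_0^{+\infty}\left(1-\frac{ix}{w}\right)^{-p}e^{-x}\,dx+\frac12\int_0^{+\infty}\left(1+\frac{ix}{w}\right)^{-p}e^{-x}\,dx=:J_++J_-,
\]
directly from the definition of $\Pi_p(w)$: in each factor $\Gamma(1-p,\pm iw)=\int_{\pm iw}^{+\infty}\tau^{-p}e^{-\tau}\,d\tau$ I substitute $\tau=\pm iw+x$, $x\in[0,+\infty)$, which absorbs $e^{\pm iw}$, and then use $\pm iw+x=\pm i(w\mp ix)$ to cancel $e^{\pm\frac{\pi}{2}ip}$ against $(\pm i)^{-p}$.

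Next I would carry out the geometric estimates. Writing $ix/w=(x/|w|)e^{i(\frac{\pi}{2}-\theta)}$, the point $1-ix/w$ traverses a ray issuing from $1$; its modulus is minimised at $\cos\theta$ and its argument sweeps the interval $(-\frac{\pi}{2}-\theta,0]$. Hence $|1-ix/w|^{-\Re(p)}\le\sec^{\Re(p)}\theta$ and $e^{\Im(p)\arg(1-ix/w)}\le\max(1,e^{\Im(p)(-\pi/2-\theta)})$, so integrating $e^{-x}$ gives $|J_+|\le\frac12\sec^{\Re(p)}\theta\,\max(1,e^{\Im(p)(-\pi/2-\theta)})$, the first term of both \eqref{eq441} and \eqref{eq657}. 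For the companion factor, $1+ix/w$ has real part $\ge 1$ and argument in $[0,\frac{\pi}{2}-\theta)$, so $|1+ix/w|\ge 1$ and $e^{\Im(p)\arg(1+ix/w)}\le\max(1,e^{\Im(p)(\pi/2-\theta)})$; thus $|J_-|\le\frac12\max(1,e^{\Im(p)(\pi/2-\theta)})$, and adding the two bounds proves \eqref{eq441}.

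To obtain the sharper second term $\frac{\Gamma(\Re(p))}{2|\Gamma(p)|}$ of \eqref{eq657} I would re-estimate the same integral $J_-$. Inserting the Laplace representation $(1+ix/w)^{-p}=\frac{1}{\Gamma(p)}\int_0^{+\infty}s^{p-1}e^{-s(1+ix/w)}\,ds$ (valid since $\Re(1+ix/w)\ge 1$), interchanging the order of integration, and evaluating the inner integral $\int_0^{+\infty}e^{-x(1+is/w)}\,dx=(1+is/w)^{-1}$, I find
\[
J_-=\frac{1}{2\Gamma(p)}\int_0^{+\infty}\frac{t^{p-1}e^{-t}}{1+it/w}\,dt,
\]
which is precisely the $\frac{1}{1+it/w}$ half of the partial-fraction decomposition of the integrand in \eqref{eq150}. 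Because $|1+it/w|\ge 1$ for $t\ge 0$, this gives $|J_-|\le\frac{1}{2|\Gamma(p)|}\int_0^{+\infty}t^{\Re(p)-1}e^{-t}\,dt=\frac{\Gamma(\Re(p))}{2|\Gamma(p)|}$, and combining with the bound on $|J_+|$ yields \eqref{eq657}.

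The hard part will be the first step and the $\arg$-bookkeeping that accompanies it: I must check that the horizontal contour $\tau=\pm iw+x$ is an admissible path for the incomplete gamma integral (it escapes to $+\infty$ in the right half-plane and, for $|\theta|<\frac{\pi}{2}$, never meets the origin) and that the principal branches of $(\pm iw+x)^{-p}$ and $(w\mp ix)^{-p}$ agree along the whole ray, which holds because $w\mp ix$ stays in the open right half-plane. I must also track the exact range of $\arg(1\mp ix/w)$, since the way these ranges---and hence the roles of $J_+$ and $J_-$---swap as $\theta$ changes sign is what produces the two sign choices in the statement. By comparison, the geometric inequalities and the identification of $J_-$ with the integral in \eqref{eq150} are routine once the representation $\Pi_p=J_++J_-$ is in hand.
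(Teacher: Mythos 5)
Your proposal is correct and takes essentially the same approach as the paper: your decomposition $\Pi_p(w)=J_++J_-$ is precisely the representation \eqref{eq836}, your geometric estimates on $\left|1\mp ix/w\right|$ and $\arg(1\mp ix/w)$ are the paper's, your Fubini re-expression of $J_-$ recovers exactly the second representation \eqref{eq777} used for \eqref{eq657}, and the conjugation reduction to $0\le\arg w<\frac{\pi}{2}$ matches as well. The only difference is cosmetic: you derive the two integral representations directly from the definition (with the branch bookkeeping you rightly flag), whereas the paper imports them from known incomplete gamma function formulas \cite[eqs. 8.6.4 and 8.6.5]{NIST}.
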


\begin{proof} It is enough to prove \eqref{eq441} when $0 \le \arg w < \frac{\pi }{2}$, because this inequality for the case $-\frac{\pi}{2} < \arg w \le 0$ can be derived by taking complex conjugates. To do so, we consider the integral representation
\begin{equation}\label{eq836}
\Pi _p (w) = \frac{1}{2}\int_0^{ + \infty } \frac{e^{ - t}}{(1 - it/w)^p }dt  + \frac{1}{2}\int_0^{ + \infty } \frac{e^{ - t}}{(1 + it/w)^p }dt ,
\end{equation}
which is valid when $|\arg w| < \frac{\pi}{2}$ and $\Re(p) > 0$. This integral representation follows from the definition of the basic terminant $\Pi _p (w)$ and a well-known representation of the incomplete gamma function \cite[eq. 8.6.5]{NIST}. To estimate the right-hand side of \eqref{eq836}, we note that for any positive real $t$ and complex $w$ with $0 \le \arg w < \frac{\pi }{2}$, the following inequalities hold:
\[
\frac{1}{{\left| {(1 \pm it/w)^p } \right|}} = \frac{1}{{\left| {1 \pm it/w} \right|^{\Re (p)} }}e^{\Im (p)(\arg (t \mp iw) \pm \frac{\pi }{2} - \arg w)}  \le \frac{1}{{\left| {1 \pm it/w} \right|^{\Re (p)} }}\max (1,e^{\Im (p)\left( { \pm \frac{\pi }{2} - \arg w} \right)} )
\]
and
\[
\left| 1 - it/w \right| \ge \cos (\arg w),\quad \left| 1 + it/w \right| \ge 1.
\]
Consequently, we obtain
\begin{align*}
\left| {\Pi _p (w)} \right| \le \; & \frac{1}{2}\int_0^{ + \infty } {\frac{{e^{ - t} }}{{\left| {1 - it/w} \right|^{\Re (p)} }}dt} \max (1,e^{ \Im (p)\left( {-\frac{\pi }{2} - \arg w } \right)} ) \\ & + \frac{1}{2}\int_0^{ + \infty } {\frac{{e^{ - t} }}{{\left| {1 + it/w} \right|^{\Re (p)} }}dt} \max (1,e^{  \Im (p)\left( {\frac{\pi }{2}-\arg w } \right)} )
\\ \le \; &  \frac{1}{2}\sec ^{\Re (p)} (\arg w)\max (1,e^{ \Im (p)\left( {-\frac{\pi }{2}-\arg w} \right)} ) + \frac{1}{2}\max (1,e^{ \Im (p)\left( { \frac{\pi }{2}-\arg w} \right)} ) .
\end{align*}
The estimate \eqref{eq657} can be proved in an analogous way, starting from the representation
\begin{equation}\label{eq777}
\Pi _p (w) = \frac{1}{2}\int_0^{ + \infty } \frac{e^{ - t} }{(1 - it/w)^p }dt  + \frac{1}{2\Gamma (p)}\int_0^{ + \infty } \frac{t^{p - 1} e^{ - t} }{1 + it/w}dt ,
\end{equation}
which is valid when $|\arg w| < \frac{\pi}{2}$ and $\Re(p) > 0$. This integral representation may be deduced from the definition of the basic terminant $\Pi _p (w)$ and two different representations of the incomplete gamma function \cite[eqs. 8.6.4 and 8.6.5]{NIST}.
\end{proof}

The following estimate was proved by the author in \cite{Nemes2} and is valid for positive real values of the order $p$.

\begin{proposition}\label{prop2} For any $p>0$ and $w$ with $\frac{\pi}{4} < \left| {\arg w} \right| < \pi$, we have
\begin{equation}\label{eq188}
\left| {\Pi _p \left( w \right)} \right| \le \frac{{\left| {\csc \left( {2\left( {\arg w - \varphi } \right)} \right)} \right|}}{{\cos ^p \varphi }},
\end{equation}
where $\varphi$ is the unique solution of the implicit equation
\[
\left( {p + 2} \right)\cos \left( {2\arg w -3\varphi} \right) = \left( {p - 2} \right)\cos\left(2\arg w-\varphi\right)
\]
that satisfies $0 < \varphi  <  - \frac{\pi}{4} + \arg w$ if $\frac{\pi}{4} < \arg w  < \frac{\pi}{2}$, $ - \frac{\pi}{2}  + \arg w  < \varphi  < -\frac{\pi}{4}+\arg w$ if $\frac{\pi}{2}  \le \arg w  < \frac{3\pi}{4}$, $ - \frac{\pi}{2}  + \arg w  < \varphi  < \frac{\pi }{2}$ if $\frac{3\pi}{4}  \le \arg w  < \pi$, $\frac{\pi}{4} + \arg w < \varphi  <  0$ if $-\frac{\pi }{2} < \arg w  < -\frac{\pi}{4}$, $\frac{\pi}{4}  + \arg w  < \varphi  < \frac{\pi}{2}+\arg w$ if $-\frac{3\pi}{4}  < \arg w  \le -\frac{\pi}{2}$ and $ - \frac{\pi}{2}  < \varphi  < \frac{\pi }{2}+ \arg w$ if $-\pi < \arg w \le -\frac{3\pi}{4}$.
\end{proposition}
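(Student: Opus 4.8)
The plan is to start from the integral representation \eqref{eq150} and deform its contour. First I would replace the ray of integration in \eqref{eq150} by the ray $t=\tau e^{i\varphi}$, $\tau\in[0,+\infty)$, for a suitably chosen real angle $\varphi$ with $|\varphi|<\frac{\pi}{2}$. By Cauchy's theorem this deformation is permissible, and for $\frac{\pi}{2}\le|\arg w|<\pi$ it simultaneously furnishes the analytic continuation of $\Pi_p(w)$: as $\arg w$ increases past $\frac{\pi}{2}$ the simple pole $t=-iw$ (located at $\arg t=\arg w-\frac{\pi}{2}$) crosses the positive real axis, and the continuation is obtained by dragging the contour along so that this pole remains strictly on one side of the ray. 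One must only ensure that the ray passes through neither pole $t=\pm iw$ and that $\cos\varphi>0$, the latter guaranteeing both convergence and the vanishing of the connecting arc at infinity. This yields, for each admissible $\varphi$, the representation
\[
\Pi_p(w)=\frac{1}{\Gamma(p)}\int_0^{+\infty}\frac{(\tau e^{i\varphi})^{p-1}e^{-\tau e^{i\varphi}}}{1+(\tau e^{i\varphi}/w)^2}\,e^{i\varphi}\,d\tau .
\]

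Next I would estimate this integral. Since $p>0$ is real, $\left|(\tau e^{i\varphi})^{p-1}e^{-\tau e^{i\varphi}}\right|=\tau^{p-1}e^{-\tau\cos\varphi}$. Writing $\alpha=2(\varphi-\arg w)$ and $r=(\tau/|w|)^2\ge0$, one has $(\tau e^{i\varphi}/w)^2=re^{i\alpha}$, and minimising $\left|1+re^{i\alpha}\right|^2=1+2r\cos\alpha+r^2$ over $r\ge0$ gives the lower bound $\left|1+(\tau e^{i\varphi}/w)^2\right|\ge|\sin\alpha|=|\sin(2(\arg w-\varphi))|$. Combining this with the elementary evaluation $\int_0^{+\infty}\tau^{p-1}e^{-\tau\cos\varphi}\,d\tau=\Gamma(p)/\cos^p\varphi$ (valid because $\cos\varphi>0$) produces the one-parameter family of bounds
\[
\left|\Pi_p(w)\right|\le\frac{\left|\csc(2(\arg w-\varphi))\right|}{\cos^p\varphi},
\]
valid for every $\varphi$ for which the above deformation is legitimate.

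It then remains to choose $\varphi$ optimally. Minimising the right-hand side is equivalent to minimising $-\log|\sin(2(\arg w-\varphi))|-p\log\cos\varphi$; differentiating in $\varphi$ and equating to zero yields $2\cot(2(\arg w-\varphi))=-p\tan\varphi$, which after clearing denominators and applying the product-to-sum identities $2\cos A\cos B=\cos(A-B)+\cos(A+B)$ and $2\sin A\sin B=\cos(A-B)-\cos(A+B)$ with $A=2(\arg w-\varphi)$, $B=\varphi$ collapses exactly to the stated implicit equation $(p+2)\cos(2\arg w-3\varphi)=(p-2)\cos(2\arg w-\varphi)$. Substituting the resulting $\varphi$ into the family of bounds delivers \eqref{eq188}.

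The hard part will be the bookkeeping that accompanies the sector $\frac{\pi}{4}<|\arg w|<\pi$. For each of the six ranges of $\arg w$ listed in the statement I would have to verify that the prescribed interval for $\varphi$ keeps $|\varphi|<\frac{\pi}{2}$ and is consistent with the analytic continuation, i.e.\ that the ray stays on the correct side of the migrating pole $t=-iw$ (for $\arg w\in(\frac{\pi}{2},\pi)$ this forces $\varphi>\arg w-\frac{\pi}{2}$, and symmetrically for negative $\arg w$). I would then have to show that the implicit equation possesses a \emph{unique} root in each such interval and that this root is a genuine minimiser rather than a maximum or inflection point, for instance via a sign or convexity analysis of the derivative combined with the blow-up of the objective at the endpoints where $\sin(2(\arg w-\varphi))\to0$ or $\cos\varphi\to0$. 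The conjugation symmetry $\Pi_p(\overline w)=\overline{\Pi_p(w)}$ (valid for real $p$) reduces the three cases with $\arg w<0$ to those with $\arg w>0$, which halves the casework.
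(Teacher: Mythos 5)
The paper never actually proves Proposition \ref{prop2}: it is imported from \cite{Nemes2}, so there is no in-paper argument to compare against. That said, your strategy is plainly the one behind the cited result --- the paper's remark that $\varphi$ ``is chosen so as to minimize the right-hand side of the inequality \eqref{eq188}'' presupposes exactly the one-parameter family of bounds you construct. Your main steps all check out: rotating the contour in \eqref{eq150} to the ray $\arg t=\varphi$ with $|\varphi|<\frac{\pi}{2}$ is legitimate under the pole conditions you state, and the condition $\varphi>\arg w-\frac{\pi}{2}$ for $\frac{\pi}{2}\le\arg w<\pi$ is the correct continuation requirement; the denominator bound is valid for \emph{all} $r\ge 0$, since with $\alpha=2(\arg w-\varphi)$ one has $\left|1+re^{i\alpha}\right|^2=(r+\cos\alpha)^2+\sin^2\alpha\ge\sin^2\alpha$ (and in every one of the stated $\varphi$-intervals $\alpha$ lies in $(\frac{\pi}{2},\pi)$, where this is also sharp in $r$); and your stationarity equation $2\cot(2(\arg w-\varphi))=-p\tan\varphi$ does collapse, via the product-to-sum identities, to $(p+2)\cos(2\arg w-3\varphi)=(p-2)\cos(2\arg w-\varphi)$.

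The only piece you leave as a plan --- the six-case localization and uniqueness analysis --- closes more easily than your sketch suggests. The objective $f(\varphi)=-\log\left|\sin(2(\arg w-\varphi))\right|-p\log\cos\varphi$ satisfies
\begin{equation*}
f''(\varphi)=4\csc^2\left(2(\arg w-\varphi)\right)+p\sec^2\varphi>0,
\end{equation*}
so $f$ is strictly convex on each admissible subinterval between its singularities, whence at most one critical point there, and that critical point is automatically the minimiser. Existence inside the stated interval then follows either from blow-up of $f$ at both endpoints (e.g.\ when $\frac{3\pi}{4}\le\arg w<\pi$, where $\varphi\to\arg w-\frac{\pi}{2}$ makes the cosecant blow up and $\varphi\to\frac{\pi}{2}$ makes the secant blow up) or, where the stated endpoints are not singular, from a sign check of $f'$: for instance, for $\frac{\pi}{4}<\arg w<\frac{\pi}{2}$ one has $f'(0)=2\cot(2\arg w)<0$ and $f'\big(\arg w-\frac{\pi}{4}\big)=p\tan\big(\arg w-\frac{\pi}{4}\big)>0$. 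Note also that, as you say, uniqueness of the root is part of the statement, but only existence and admissibility of $\varphi$ are needed for the inequality itself, since \eqref{eq188} holds for every admissible $\varphi$. With this convexity argument supplied in each of the (conjugation-reduced) three cases, your proposal is a complete and correct proof.
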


We remark that the value of $\varphi$ in this proposition is chosen so as to minimize the right-hand side of the inequality \eqref{eq188}.

\begin{proposition}\label{prop3} For any complex $p$ with $\Re(p)>0$, we have
\begin{gather}\label{eq834}
\begin{split}
\left| {\Pi _p (w)} \right| \le \; & \frac{1}{2}  + \frac{\left| p \right|}{2 {\Re (p)} }{}_2F_1 \left( {\frac{1}{2},\frac{{\Re (p)}}{2};\frac{{\Re (p)}}{2} + 1;\sin ^2 (\arg w)} \right)\max (1,e^{ - \Im (p)\arg w} ) \\ & + \frac{1}{2}\max (1,e^{ \Im (p)\left( { \pm\frac{\pi }{2} - \arg w} \right)} )
\\  \le \; & \frac{1}{2} + \frac{{\left| p \right|}}{{2\Re (p)}}\chi (\Re (p))\max (1,e^{ - \Im (p)\arg w} ) + \frac{1}{2}\max (1,e^{\Im (p)\left( { \pm \frac{\pi }{2} - \arg w} \right)} )
\end{split}
\end{gather}
and
\begin{gather}\label{eq839}
\begin{split}
\left| {\Pi _p (w)} \right| \le \; & \frac{1}{2}  + \frac{\left| p \right|}{2 {\Re (p)} }{}_2F_1 \left( {\frac{1}{2},\frac{{\Re (p)}}{2};\frac{{\Re (p)}}{2} + 1;\sin ^2 (\arg w)} \right)\max (1,e^{ - \Im (p)\arg w} ) \\ & + \frac{\Gamma (\Re (p))}{2\left| \Gamma (p) \right|}
\\  \le \; & \frac{1}{2} + \frac{{\left| p \right|}}{{2\Re (p)}}\chi (\Re (p))\max (1,e^{ - \Im (p)\arg w} ) + \frac{\Gamma (\Re (p))}{2\left| \Gamma (p) \right|},
\end{split}
\end{gather}
for $\frac{\pi }{4} < \pm \arg w \le \frac{\pi }{2}$.
\end{proposition}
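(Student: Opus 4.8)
The plan is to prove \eqref{eq834} from the integral representation \eqref{eq836} and \eqref{eq839} from \eqref{eq777}, treating only $0\le\arg w\le\frac\pi2$ since the range $-\frac\pi2\le\arg w\le0$ then follows by taking complex conjugates, exactly as in the derivation of \eqref{eq441}--\eqref{eq657}. Both representations share the \emph{same} first integral $\frac12\int_0^{+\infty}e^{-t}(1-it/w)^{-p}dt$, and their second integrals are precisely the ones already estimated in the preceding proposition: the bound $|1+it/w|\ge1$ gives $\frac12\max(1,e^{\Im(p)(\pm\frac\pi2-\arg w)})$ for the second integral of \eqref{eq836}, while $|1+it/w|\ge1$ together with $|t^{p-1}|=t^{\Re(p)-1}$ gives $\frac{\Gamma(\Re(p))}{2|\Gamma(p)|}$ for the second integral of \eqref{eq777}. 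These are exactly the final terms of \eqref{eq834} and \eqref{eq839}. Hence the whole proposition reduces to the single estimate
\[ \Big|\tfrac12\int_0^{+\infty}\frac{e^{-t}}{(1-it/w)^p}dt\Big| \le \tfrac12 + \frac{|p|}{2\Re(p)}\,{}_2F_1\Big(\tfrac12,\tfrac{\Re(p)}2;\tfrac{\Re(p)}2+1;\sin^2(\arg w)\Big)\max(1,e^{-\Im(p)\arg w}), \]
valid for $\frac\pi4<\arg w\le\frac\pi2$.

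To produce the isolated $\frac12$ I would integrate by parts, using $\frac{d}{dt}(1-it/w)^{-p}=\frac{ip}{w}(1-it/w)^{-p-1}$: the boundary term at $t=0$ contributes exactly $\frac12$, and there remains $\frac{ip}{2w}\int_0^{+\infty}e^{-t}(1-it/w)^{-p-1}dt$, whose modulus is $\frac{|p|}{2|w|}$ times an integral of $e^{-t}|1-it/w|^{-\Re(p)-1}$ against the phase $e^{\Im(p)\arg(1-it/w)}$. The key is to show this is bounded by $\frac{|p|}{2\Re(p)}\,{}_2F_1(\ldots)\max(1,e^{-\Im(p)\arg w})$. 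My approach would be to deform the contour onto the ray where $1-it/w$ is real (suitably indenting around the branch point $t=-iw$) and then introduce the variable $u$ via $1-u^2\sin^2(\arg w)=|1-it/w|^2$; under this substitution the modulus factor together with the Jacobian reduce to $u^{\Re(p)-1}(1-u^2\sin^2(\arg w))^{-1/2}du$ and the stray factor $|w|^{-1}$ is absorbed. The integral then collapses to $\frac{|p|}2\int_0^1 u^{\Re(p)-1}(1-u^2\sin^2(\arg w))^{-1/2}du$, which is the Euler integral for $\frac{|p|}{2\Re(p)}\,{}_2F_1(\frac12,\frac{\Re(p)}2;\frac{\Re(p)}2+1;\sin^2(\arg w))$, while the surviving exponential factor is controlled by $\max(1,e^{-\Im(p)\arg w})$.

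Finally, to pass from the first (sharp) lines of \eqref{eq834}--\eqref{eq839} to the second (cruder) lines, I would bound the hypergeometric factor by $\chi(\Re(p))$. Since $\Re(p)>0$, all coefficients in the series ${}_2F_1(\frac12,\frac{\Re(p)}2;\frac{\Re(p)}2+1;x)$ are positive, so it is increasing in $x\in[0,1]$, and $\sin^2(\arg w)\le1$ yields
\[ {}_2F_1\Big(\tfrac12,\tfrac{\Re(p)}2;\tfrac{\Re(p)}2+1;\sin^2(\arg w)\Big) \le {}_2F_1\Big(\tfrac12,\tfrac{\Re(p)}2;\tfrac{\Re(p)}2+1;1\Big). \]
By Gauss's summation theorem, whose hypothesis holds because $c-a-b=\frac12>0$, the right-hand side equals $\frac{\Gamma(\frac{\Re(p)}2+1)\Gamma(\frac12)}{\Gamma(\frac{\Re(p)}2+\frac12)\Gamma(1)}=\pi^{1/2}\frac{\Gamma(\frac{\Re(p)}2+1)}{\Gamma(\frac{\Re(p)}2+\frac12)}=\chi(\Re(p))$, exactly the replacement needed in \eqref{eq834} and \eqref{eq839}. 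The main obstacle I anticipate is the sharp estimate of the first integral: a naive modulus bound confines $\arg(1-it/w)$ only to $(-\frac\pi2-\arg w,0)$ and would produce the weaker factor $\max(1,e^{-\Im(p)(\frac\pi2+\arg w)})$ together with a $|w|$-dependent constant, so obtaining the stated $\max(1,e^{-\Im(p)\arg w})$ and a $|w|$-free bound genuinely requires the contour deformation and exact evaluation above. Carrying that deformation out rigorously---justifying the passage across the minimum of $|1-it/w|$ at $t=|w|\sin(\arg w)$ and controlling the branch singularity at $t=-iw$, which lies on the contour in the limiting case $\arg w=\frac\pi2$---is the delicate part of the argument.
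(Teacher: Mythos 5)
Your proof has the same skeleton as the paper's: the reduction to $0\le\arg w\le\frac{\pi}{2}$ by conjugation, the use of \eqref{eq836} for \eqref{eq834} and of \eqref{eq777} for \eqref{eq839}, the bounds $\frac{1}{2}\max(1,e^{\Im(p)(\frac{\pi}{2}-\arg w)})$ and $\frac{\Gamma(\Re(p))}{2|\Gamma(p)|}$ for the respective second integrals, the integration by parts producing the isolated $\frac{1}{2}$ (the paper's \eqref{eq833}), and the Gauss-theorem step ${}_2F_1\big(\frac{1}{2},\frac{\Re(p)}{2};\frac{\Re(p)}{2}+1;1\big)=\chi(\Re(p))$ are all exactly as in the paper; your Euler-integral identity $\frac{|p|}{2\Re(p)}{}_2F_1\big(\frac{1}{2},\frac{\Re(p)}{2};\frac{\Re(p)}{2}+1;x\big)=\frac{|p|}{2}\int_0^1u^{\Re(p)-1}(1-xu^2)^{-1/2}du$ is also correct. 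But the one step that carries the whole proposition --- the $|w|$-free, $\max(1,e^{-\Im(p)\arg w})$-type bound for $\frac{ip}{2w}\int_0^{+\infty}e^{-t}(1-it/w)^{-p-1}dt$ --- rests on a contour deformation that fails. The ray on which $1-it/w$ is real is $\arg t=\arg w-\frac{\pi}{2}$; it passes through the branch point $t=-iw$ at distance $|w|$, and beyond that point it coincides with the branch cut, since there $1-it/w=1-r/|w|<0$. Near $t=-iw$ the modulus of the integrand behaves like $|1-r/|w||^{-\Re(p)-1}$ with $\Re(p)+1>1$, so the integral along your target ray is divergent on both sides of the branch point, and an indentation of radius $\epsilon$ contributes $\mathcal{O}(\epsilon^{-\Re(p)})$: no indented version of Cauchy's theorem produces contour integrals that even exist, let alone admit modulus estimates. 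Two further symptoms confirm the contour is the wrong one: on the part of the ray before the branch point, $1-it/w$ is positive real, so the phase factor there is identically $1$ and the stated factor $\max(1,e^{-\Im(p)\arg w})$ could not arise from it; and your substitution $1-u^2\sin^2(\arg w)=|1-it/w|^2$ parametrizes only the bounded piece where $|1-it/w|\le1$, two-to-one, leaving the rest of the contour unaccounted for.

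The paper's remedy is the opposite rotation: turn the contour counterclockwise through a right angle onto the positive imaginary axis, which stays clear of the branch point (for $0<\arg w<\frac{\pi}{2}$ it lies in the lower half-plane, reaching the positive real axis only at $\arg w=\frac{\pi}{2}$, a boundary case absorbed by analytic continuation). This gives the representation \eqref{eq912}, in which the exponential becomes the unimodular $e^{-it}$ and the algebraic factor becomes $(1+t/w)^{-p-1}$ with $\arg(1+t/w)\in(-\arg w,0]$ --- whence precisely $\max(1,e^{-\Im(p)\arg w})$ --- and with
\begin{equation*}
\left|1+\frac{t}{w}\right|^2=1+2\frac{t}{|w|}\cos(\arg w)+\frac{t^2}{|w|^2}\ge 1,
\end{equation*}
so the integral converges absolutely even at $\arg w=\frac{\pi}{2}$. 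The change of variable $u=t/|w|$ then absorbs the stray $|w|^{-1}$ and yields the $|w|$-free integral $\frac{|p|}{2}\int_0^{+\infty}(1+2u\cos(\arg w)+u^2)^{-\frac{\Re(p)+1}{2}}du$, which the paper evaluates (quoting \cite{Nemes2}) as the hypergeometric factor in \eqref{eq834}; this quantity is the same as your Euler integral, so your endgame is intact. Replace your deformation by this rotation and your argument becomes the paper's proof; as written, the central estimate is unproved.
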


\begin{proof} It is sufficient to prove \eqref{eq834} for $\frac{\pi }{4} < \arg w \le \frac{\pi }{2}$, as the estimates for $-\frac{\pi }{2} \le \arg w < -\frac{\pi }{4}$ can be derived by taking complex conjugates. Integrating once by parts in \eqref{eq836}, we obtain
\begin{equation}\label{eq833}
\Pi _p (w) = \frac{1}{2} - \frac{p}{2iw}\int_0^{ + \infty } \frac{e^{ - t}}{(1 - it/w)^{p + 1}}dt  + \frac{1}{2}\int_0^{ + \infty } \frac{e^{ - t}}{(1 + it/w)^p}dt .
\end{equation}
Assuming that $\frac{\pi }{4} < \arg w \le \frac{\pi }{2}$, we can deform the contour of integration in \eqref{eq833} by rotating it through a right angle. And therefore, by Cauchy's theorem and analytic continuation, we have
\begin{equation}\label{eq912}
\Pi _p (w) = \frac{1}{2} - \frac{p}{2w}\int_0^{ + \infty } \frac{e^{ - it}}{(1 + t/w)^{p + 1}}dt  + \frac{1}{2}\int_0^{ + \infty } \frac{e^{ - t}}{(1 + it/w)^p}dt
\end{equation}
when $\frac{\pi }{4} < \arg w \le \frac{\pi }{2}$. Hence, we may assert that
\begin{align*}
\left| {\Pi _p (w)} \right| \le \; & \frac{1}{2} + \frac{{\left| p \right|}}{{2\left| w \right|}}\int_0^{ + \infty } {\frac{{dt}}{{\left| {(1 + t/w)^{p + 1} } \right|}}}  + \frac{1}{2}\int_0^{ + \infty } {\frac{{e^{ - t} }}{{\left| {(1 + it/w)^p } \right|}}dt} 
\\ \le \; & \frac{1}{2} + \frac{{\left| p \right|}}{{2\left| w \right|}}\int_0^{ + \infty } {\frac{{dt}}{{\left| {1 + t/w} \right|^{\Re (p) + 1} }}} \max (1,e^{ - \Im (p)\arg w} ) \\ & + \frac{1}{2}\int_0^{ + \infty } {\frac{{e^{ - t} }}{{\left| {1 + it/w} \right|^{\Re (p)} }}dt} \max (1,e^{ \Im (p)\left( {  \frac{\pi }{2} - \arg w} \right)} )
\\ \le \; & \frac{1}{2} + \frac{{\left| p \right|}}{{2\left| w \right|}}\int_0^{ + \infty } {\frac{{dt}}{{(1 + 2t/\left| w \right|\cos (\arg w) + t^2 /\left| w \right|^2 )^{\frac{{\Re (p) + 1}}{2}} }}} \max (1,e^{ - \Im (p)\arg w} ) \\ & + \frac{1}{2}\max (1,e^{ \Im (p)\left( { \frac{\pi }{2} - \arg w} \right)} )
\\ = \; & \frac{1}{2} + \frac{{\left| p \right|}}{2}\int_0^{ + \infty } {\frac{{du}}{{(1 + 2u\cos (\arg w) + u^2 )^{\frac{{\Re (p) + 1}}{2}} }}} \max (1,e^{ - \Im (p)\arg w} ) \\ & + \frac{1}{2}\max (1,e^{ \Im (p)\left( { \frac{\pi }{2} - \arg w} \right)} ).
\end{align*}
In the last step, we have performed the change of integration variable from $t$ to $u$ by $u = t/ |w|$. The $u$-integral can be evaluated in terms of the hypergeometric function (see \cite{Nemes2}) giving the first estimate in \eqref{eq834}. To obtain the second inequality in \eqref{eq834}, note that
\begin{align*}
{}_2F_1 \left( {\frac{1}{2},\frac{{\Re (p)}}{2};\frac{{\Re (p)}}{2} + 1;\sin ^2 (\arg w)} \right) & \le {}_2F_1 \left( {\frac{1}{2},\frac{{\Re (p)}}{2};\frac{{\Re (p)}}{2} + 1;1} \right)
\\ & = \frac{{\Gamma \left( {\frac{1}{2}} \right)\Gamma \big( {\frac{{\Re (p)}}{2} + 1} \big)}}{{\Gamma (1)\Gamma \big( {\frac{{\Re (p)}}{2} + \frac{1}{2}} \big)}} = \pi ^{\frac{1}{2}} \frac{{\Gamma \big( {\frac{{\Re (p)}}{2} + 1} \big)}}{{\Gamma \big( {\frac{{\Re (p)}}{2} + \frac{1}{2}} \big)}} = \chi (\Re (p)) .
\end{align*}
The bounds \eqref{eq839} can be proved in a similar way, starting from the representation \eqref{eq777}.
\end{proof}

\begin{proposition}\label{prop4} For any complex $p$ with $\Re(p)>0$, we have
\begin{gather}\label{eq930}
\begin{split}
\left| {\Pi _p (w)} \right| & \le e^{\Im (p)\left( { \pm \frac{\pi }{2} - \arg w} \right)} \frac{{\Gamma (\Re (p))}}{{\left| {\Gamma (p)} \right|}}\frac{{\sqrt {2\pi \Re (p)} }}{{2\left| {\sin (\arg w)} \right|^{\Re (p)} }} + |\Pi _p (we^{ \mp \pi i} )| \\ & \le e^{\Im (p)\left( { \pm \frac{\pi }{2} - \arg w} \right)} \frac{{\Gamma (\Re (p))}}{{\left| {\Gamma (p)} \right|}}\frac{{\chi (\Re (p))}}{{\left| {\sin (\arg w)} \right|^{\Re (p)} }} + |\Pi _p (we^{ \mp \pi i} )|,
\end{split}
\end{gather}
for $\frac{\pi}{2}<\pm \arg w <\pi$.
\end{proposition}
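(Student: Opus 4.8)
The plan is to reduce the range $\frac{\pi}{2}<\arg w<\pi$ (I treat this \emph{$+$ case} in detail; the \emph{$-$ case} $-\pi<\arg w<-\frac{\pi}{2}$ is entirely analogous, and can also be obtained by reflection via $\overline{\Pi_p(w)}=\Pi_{\bar p}(\bar w)$) to the range covered by the integral representation \eqref{eq150} by means of a connection formula, and then to bound the resulting extra term.

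First I would establish the connection formula
\[
\Pi_p(w)=\Pi_p(we^{-\pi i})+\frac{\pi\, w\,(-iw)^{p-1}e^{iw}}{\Gamma(p)},\qquad \tfrac{\pi}{2}<\arg w<\pi .
\]
To this end, start from \eqref{eq150}, which holds for $|\arg w|<\frac{\pi}{2}$, and continue it analytically in $w$ across the ray $\arg w=\frac{\pi}{2}$. The integrand $t^{p-1}e^{-t}/(1+(t/w)^2)$ has simple poles at $t=\pm iw$; as $\arg w$ increases through $\frac{\pi}{2}$ only the pole at $t=-iw$ crosses the positive real axis (from the lower to the upper half-plane), while $t=iw$ migrates harmlessly into the third quadrant. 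To preserve analyticity the contour must be kept on the same side of $t=-iw$, so the continued value of \eqref{eq150} equals the straight integral along $(0,+\infty)$ minus $2\pi i$ times the residue at $t=-iw$. The key observation is that $(t/(we^{-\pi i}))^2=(t/w)^2$, so the straight integral is exactly $\Pi_p(we^{-\pi i})$ (note $\arg(we^{-\pi i})\in(-\frac{\pi}{2},0)$, where \eqref{eq150} applies); computing the residue, with $(-iw)^{p-1}$ taken on the branch $\arg(-iw)=\arg w-\frac{\pi}{2}\in(0,\frac{\pi}{2})$, then yields the displayed formula. As a consistency check, inserting this into \eqref{eq33} and summing over $k$ reproduces the continuation formula \eqref{eq778}.

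It remains to bound the extra term. Taking absolute values and writing $x=|w|\,|\sin(\arg w)|>0$,
\[
\left|\frac{\pi\, w\,(-iw)^{p-1}e^{iw}}{\Gamma(p)}\right|
=\frac{\pi}{|\Gamma(p)|}\,e^{\Im(p)(\frac{\pi}{2}-\arg w)}\,\frac{x^{\Re(p)}e^{-x}}{|\sin(\arg w)|^{\Re(p)}} .
\]
Maximising $x^{\Re(p)}e^{-x}$ over $x>0$ (the maximum is attained at $x=\Re(p)$, with value $(\Re(p))^{\Re(p)}e^{-\Re(p)}$) and then invoking the elementary Stirling lower bound $\Gamma(\Re(p))\ge\sqrt{2\pi}\,(\Re(p))^{\Re(p)-\frac12}e^{-\Re(p)}$ gives $\pi(\Re(p))^{\Re(p)}e^{-\Re(p)}\le \tfrac{\sqrt{2\pi\,\Re(p)}}{2}\Gamma(\Re(p))$. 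Combining this estimate with the triangle inequality applied to the connection formula delivers the first line of \eqref{eq930}; the second line follows at once from $\tfrac{\sqrt{2\pi\,\Re(p)}}{2}=\sqrt{\tfrac{\pi\,\Re(p)}{2}}\le\chi(\Re(p))$, which is the Gautschi--Wendel inequality $\Gamma(x+1)/\Gamma(x+\tfrac12)\ge\sqrt{x}$ applied with $x=\tfrac{\Re(p)}{2}$.

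The main obstacle is the first step: correctly pinning down the analytic continuation across $\arg w=\frac{\pi}{2}$, i.e.\ verifying that exactly one pole crosses the contour and that the induced term carries the correct sign and the correct branch of $(-iw)^{p-1}$. Once the connection formula is in hand the estimation is routine; in particular, the appearance of $\sqrt{2\pi\,\Re(p)}/2$ is forced precisely by the Stirling lower bound saturating the maximum of $x^{\Re(p)}e^{-x}$.
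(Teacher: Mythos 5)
Your proposal is correct and follows essentially the same route as the paper: the connection formula you derive by contour deformation of \eqref{eq150} is exactly the functional relation \eqref{eq222} (which the paper quotes from \cite{Nemes2}), and your subsequent steps --- maximising $x^{\Re(p)}e^{-x}$ at $x=\Re(p)$, the Stirling-type lower bound $\Gamma(q)\ge\sqrt{2\pi}\,q^{q-\frac{1}{2}}e^{-q}$, and $\sqrt{\pi q/2}\le\chi(q)$ --- are precisely those used in the paper's proof of Proposition \ref{prop4}. The only difference is that you prove the cited functional relation from scratch (with the pole-crossing, sign and branch of $(-iw)^{p-1}$ all handled correctly), making the argument self-contained where the paper relies on a reference.
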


The dependence on $|w|$ in these estimates may be eliminated by employing the bounds for $|\Pi _p (we^{ \mp \pi i} )|$ that were derived previously.

\begin{proof} The proof is based on the functional relation \cite{Nemes2}
\begin{equation}\label{eq222}
\Pi _p (w) = \pm \pi i\frac{{e^{ \mp \frac{\pi }{2}ip} }}{{\Gamma (p)}}w^p e^{ \pm iw}  + \Pi _p (we^{ \mp \pi i} ).
\end{equation}
We take the upper or lower sign in \eqref{eq222} according as $\frac{\pi}{2}<\arg w < \pi$ or $-\pi <\arg w < -\frac{\pi}{2}$. Now, from \eqref{eq222} we can infer that
\[
\left| {\Pi _p (w)} \right| \le \pi e^{\Im (p)\left( { \pm \frac{\pi }{2} - \arg w} \right)} \frac{1}{{\left| {\Gamma (p)} \right|}}\left| w \right|^{\Re (p)} e^{ - \left| w \right|\left| {\sin (\arg w)} \right|}  + |\Pi _p (we^{ \mp \pi i} )|.
\]
Notice that the quantity $r^q e^{-r\alpha}$, as a function of $r > 0$, takes its maximum value at $r=q/\alpha$ when $\alpha > 0$ and $q > 0$. We therefore find that
\begin{align*}
\left| {\Pi _p (w)} \right| & \le \pi e^{\Im (p)\left( { \pm \frac{\pi }{2} - \arg w} \right)} \frac{1}{{\left| {\Gamma (p)} \right|}}\frac{{\Re (p)^{\Re (p)} e^{ - \Re (p)} }}{{\left| {\sin (\arg w)} \right|^{\Re (p)} }} + |\Pi _p (we^{ \mp \pi i} )|
\\ & \le e^{\Im (p)\left( { \pm \frac{\pi }{2} - \arg w} \right)} \frac{{\Gamma (\Re (p))}}{{\left| {\Gamma (p)} \right|}}\frac{{\sqrt {2\pi \Re (p)} }}{{2\left| {\sin (\arg w)} \right|^{\Re (p)} }} + |\Pi _p (we^{ \mp \pi i} )|.
\end{align*}
The second inequality can be obtained from the well-known fact that $\sqrt{2\pi}q^{q-\frac{1}{2}}e^{-q}\leq \Gamma(q)$ for any $q>0$ (see, for instance, \cite[eq. 5.6.1]{NIST}). We derive the second bound in \eqref{eq930} from the result that
\[
\left(\frac{q}{2}\right)^{\frac{1}{2}}  \le \frac{{\Gamma \left( {\frac{q}{2} + 1} \right)}}{{\Gamma \left( {\frac{q}{2} + \frac{1}{2}} \right)}}
\]
for any $q>0$ (see, e.g., \cite[eq. 5.6.4]{NIST}) and the definition of $\chi\left(q\right)$.
\end{proof}

Finally, we mention the following two-sided inequality proved by Watson \cite{Watson} for positive real values of $p$:
\[
\sqrt {\frac{\pi }{2}\Big( p + \frac{1}{2} \Big)}  < \chi(p) < \sqrt {\frac{\pi }{2}\Big( p + \frac{2}{\pi} \Big)} .
\]
The upper inequality can be used to simplify the error bounds involving $\chi(p)$.

\end{document}